\newtheorem{proposition}{\textbf{Proposition}}
\newtheorem{theorem}{\textbf{Theorem}}
\newtheorem{definition}{\textbf{Definition}}
\providecommand{\review}[1]{#1}
\providecommand{\revise}[1]{{#1}}
\newcommand{\inR}{\in \mathbb{R}}
\newcommand{\R}{ \mathbb{R}}
\newcommand{\Z}{ \mathbb{Z}}
\newcommand{\N}{ \mathbb{N}}
\newcommand{\Lop}{{\rm L}}
\newcommand{\Dop}{{\rm D}}
\newcommand{\dint}{{\rm d}}
\newcommand{\bx}{{\boldsymbol x}}
\newcommand{\bw}{{\boldsymbol \omega}}
\DeclareMathOperator*{\esssup}{ess\,sup}
\def\V#1{{\boldsymbol{#1}}}         
\def\Spc#1{{\mathcal{#1}}}  
\def\M#1{{\bf{#1}}}  
\def\Op#1{{\mathrm{#1}}}  
\def\ee{\mathrm{e}} 
\def\jj{\mathrm{j}} 
\def\One{\mathbbm{1}}
\title{Splines are universal solutions of linear inverse problems with generalized-TV regularization}
\author{Michael Unser\thanks{Biomedical Imaging Group, \'Ecole polytechnique f\'ed\'erale de Lausanne (EPFL), CH-1015 Lausanne, Switzerland}
\and
Julien Fageot\footnotemark[1]
\and
John Paul Ward\footnotemark[1]
\thanks{Department of Mathematics, University of Central Florida, Orlando, FL, USA. Present address: Department of Mathematics, North Carolina A\&T State University, Greensboro, NC, USA}
}
\begin{document}

\maketitle

\begin{abstract}
Splines come in a variety of flavors that can be characterized in terms of some differential operator $\Lop$. The simplest piecewise-constant model corresponds to the derivative operator. Likewise, one can extend the traditional notion of total variation by considering more general operators than the derivative. 
\revise{This results in the definition of a generalized total variation semi-norm and of its corresponding native space,}
which is further identified as the direct sum of two Banach spaces.
%
We then prove that the minimization of the generalized total variation (gTV), 
subject to some arbitrary (convex) consistency constraints on the linear measurements of the signal, admits nonuniform $\Lop$-spline solutions with fewer knots than the number of measurements. This shows that nonuniform splines are universal solutions of continuous-domain linear inverse problems with LASSO, $L_1$, or total-variation-like regularization constraints. Remarkably, the type of spline is fully determined by the choice of $\Lop$ and does not depend on 
the actual nature of the measurements.

\end{abstract}

\textbf{Keywords.}  
  Sparsity, total variation, splines, inverse problems, compressed sensing

\textbf{AMS subject classifications.}
  41A15, 
  47A52, 
  94A20, 
  46E27, 
  46N20, 
  47F05, 
  34A08, 
  26A33 



\section{Introduction}
Imposing sparsity constraints is a powerful paradigm for solving ill-posed inverse problems and/or for reconstructing signals at super-resolution \cite{Bruckstein2009}. 
\review{This is usually achieved by formulating the 
task as an optimization problem that includes some form of $\ell_1$ regularization \cite{Tibshirani1996}.}
The concept is central to the theory of compressed sensing (CS) \cite{Candes2007,Donoho2006} and is currently driving the development of a new generation of algorithms for the reconstruction of biomedical images \cite{Lustig2007}.
\review{The primary factors that have contributed to making sparsity a remarkably popular research topic during the past decade are as follows:
\begin{itemize}
\item the possibility of recovering the signal from few measurements (CS) with a theoretical guarantee of perfect recovery under strict conditions \cite{Candes2008a,Candes2006, Donoho2006};
\item  the availability of fast iterative solvers for this class of problems \cite{Beck2009b, Daubechies2004,Figueiredo2003, Ramani2011};
\item the increasing evidence of the superiority of the sparsity-promoting schemes over the classical linear reconstruction
(including the Tikhonov $\ell_2$ regularization) in a variety of imaging modalities  \cite{Lustig2007}.
\end{itemize}
}

The approach developed in this paper is also driven by the idea of sparsity. However, it deviates from the standard paradigm because the
recovery problem is formulated in the continuous domain under the practical constraint of a finite number of linear measurements. The ill-posedness of the problem is then dealt with by searching for a solution that is consistent with the measurements and that minimizes a generalized version of the total-variation (TV) semi-norm---the continuous-domain counterpart of $\ell_1$ regularization. Our major finding (Theorem 1) is that the extremal points of this kind of recovery problem are nonuniform splines whose type is matched to the regularization operator $\Lop$. The powerful aspect is that the result holds in full generality, as long as the problem remains convex. The only constraint is that the linear inverse problem should be well-posed over the (very small) null space of the regularization operator, which is the minimal requirement for any valid regularization scheme. \revise{In particular, Theorem 1 gives a theoretical explanation of the well-documented observation that total variation regularization---the simplest case of the present theory with $\Lop=\Dop$ (derivative operator)---tends to produce piecewise-constant solutions \cite{Chambolle2004, Rudin1992}. Recognizing the intimate connection between linear inverse problems and splines is also helpful for discretization purposes because it provides us with a parametric representation of the solution that is controlled by the regularization operator $\Lop$.}
In that respect, our representer theorems extend some older results on spline interpolation with minimum $L_1$-norms, including the adaptive regression splines of Mammen and van de Geer \cite{Mammen1997}
and the functional analytic characterization of Fisher and Jerome \cite{Fisher1975}. There is a connection as well with the work of Steidl {\em et~al.}~on splines and higher-order TV \cite{steidl2006splines}, although their formulation is strictly discrete and restricted to the denoising problem.

\review{\section{Linear Inverse Problems: Current Status and Motivation} \revise{Our notational convention is to use bold letters to denote ordinary vectors and matrices to distinguish them from their 
infinite-dimensional counterparts; that is, functions (such as $s$) 
and linear operators (such as $\Lop$).}
Simply stated, the inverse problem is to recover a signal $s$ from a finite set of linear measurements \revise{$\M y=\M y_0(s)+ \M n \in \R^M$ where $\M n$ is a disturbance term that is usually assumed to be small and independent of $s$}. 
In most real-world problem
the unknown signal lives in the continuum so that it is appropriate to view it as an element of some Banach space $\Spc B$. Then, by the assumption of linearity, there exists a set of functionals $\nu_m \in \Spc B'$ \revise{(the continuous dual of $\Spc B$)} with $m=1,\dots,M$ such that the noise-free measurements are given by $\M y_0=\V \nu(s)=(\langle \nu_1,s\rangle, \dots,\revise{\langle \nu_{M},s\rangle})$. 
The measurement functionals $\nu_m$ are governed by the underlying physics (forward model) and assumed to be known.
Since the signal $s \in \Spc B$ is an infinite-dimensional entity and the number of measurements is finite, the inverse problem is obviously ill-posed, not to mention the fact that the true measurements $\M y$ are typically only approximate versions of $\M y_0$ since they are corrupted by noise.}
%

\review{\subsection{Finite-Dimensional Formulation} The standard approach for the resolution of such inverse problems is to select some finite-dimensional reconstruction space $\Spc V={{\rm span}\{\varphi_n\}_{n=1}^N}$ $ {\subset \Spc B}$. Based on the (simplifying) assumption that $s\in \Spc V$, one then converts the original noise-free forward model into the discretized version $\M y_0=\M A \M x$, where $\M x \inR^N$ represents the expansion coefficients of $s$ in the basis $\{\varphi_n\}_{n=1}^N$. Here, $\M A$
is the so-called {\em sensing matrix} of size  $(M \times N)$ whose entries are given by $[\M A]_{m,n}=\langle \nu_m,\varphi_n\rangle$. }

\review{The basic assumption made by the theory of compressed sensing is that there exists a finite-dimensional basis (or dictionary) $\{\varphi_n\}_{n=1}^N$ that ``sparsifies'' the class of desired signals with the property that $\|\M x\|_0\le K_0$ for some fixed $K_0$ which is (much) smaller than $N$; in other words, it should be possible to {\em synthesize} the signal exactly by restricting the expansion to no more than $K_0$ atoms in the basis $\{\varphi_n\}_{n=1}^N$  \cite{Donoho2003,Elad2010b,
Rauhut2008}.
The signal recovery is then recast as the constrained optimization problem
\begin{align}
\label{Eq:CS1}
\arg \min_{\M x \in \R^{N}}  \|\M x\|_1 \quad\mbox{ s.t. }\quad \|\M y-\M A \M x\|_2^2 \le \epsilon^2,
\end{align}
where the minimization of the $\ell_1$ norm promotes sparse solutions \cite{Tibshirani1996}. The role of the right-hand-side inequality is to encourage consistency between the noisy measurements $\M y$ and their noise-free restitution $\M y_0=\M A \revise{\M x}$. 
The popularity of \eqref{Eq:CS1}
stems from the fact that the theory of CS guarantees a faithful signal recovery from $M>2 K_0$ measurements under strict conditions on $\M A$ ({\em i.e.}, restricted isometry) \cite{Candes2008a, Candes2006, Donoho2006}.}
%

\review{
Instead of basing the recovery on the synthesis formula $\revise{s=\sum_{n} x_n \varphi_n} \in \Spc V$, one can adopt an alternative {\em analysis} or {\em regularization} point of view.
To that end, one typically assumes that $s$ is discretized in some implicit ``pixel'' basis with expansion coefficients $\M s=(s_1,\dots,s_N) \in \R^N$, where the $s_n$ are the samples of the underlying signal. The corresponding system matrix (forward model) is denoted by $\M H: \R^N \to \R^M$.
Given some appropriate regularization operator $\M L: \R^{N} \to \R^{N'}$, the idea then is to exploit the property that the transformed version of the signal, $\M L \M s$, is sparse. This translates into the optimization problem
\begin{align}
\label{Eq:anal}
\arg \min_{\M s \in \R^{N}} \|\M L \M s\|_1\quad  \mbox{ s.t. } \quad \|\M y-\M H \M s\|_2^2 \le \epsilon^2,
\end{align}
which is slightly more involved than \eqref{Eq:CS1}.  The two forms are equivalent only when $N'=N$ and $\M L$ is invertible, the connection being $\M A=\M H \M L^{-1}$. 
For computational purposes,
\eqref{Eq:anal} is often converted into the equivalent unconstrained version of the problem
\begin{align}
\label{Eq:penalized}
\arg \min_{\M s \in \R^{N}} \big(\|\M y-\M H \M s\|_2^2 + \lambda \|\M L \M s\|_1\big),
\end{align}
where $\lambda\in \R^+$ is an adjustable regularization parameter that needs to tuned such that $\|\M y-\M H \M s\|_2^2=\epsilon^2$.
One of the preferred choices for $\M L$ is the finite-difference operator---or the discrete version of the gradient in dimensions higher than one. This corresponds to the  ``total-variation'' reconstruction method, which is widely used in applications  \cite{Beck2009, Chambolle2004, Goldstein2009, Rudin1992}.
}

\review{
The sparsity-promoting effect of these discrete formulations and the conditions under which the expansion coefficients of the signal can be recovered are fairly well understood \cite{Foucart2013, Unser2016}. What is less satisfactory 
is the intrinsic interdependence between the sparsity constraints and the choice of the appropriate reconstruction space, which makes it difficult to deduce rates of convergence and error estimates relating to the underlying continuous-domain recovery problem.
}

\review{
\subsection{Infinite-Dimensional Formulation} 
\label{Sec:linkTV}
Recently, Adcock and Hansen have addressed the above limitation by formulating an infinite-dimensional theory of CS \cite{Adcock2011}.
The measurements are the same as before, but the unknown signal is now a function $s: \R^d \to \R$.
For the purpose of illustration, we take $d=1$ and $s \in {\rm BV}(\R)$ with the property that such a function admits the (unique)  expansion $\revise{s}=\sum_{n} w_n \psi_n$ in the (properly normalized) Haar
wavelet basis $\{\psi_n\}$. It is known that the condition $\revise{s} \in {\rm BV}(\R)$ implies  \revise{ the inclusion of $w=(w_n)$ in weak-$\ell_1(\Z)$---a space that is slightly larger than $\ell_1(\Z)$} \cite{Cohen2003}. Conversely, one can force the inclusion in ${\rm BV}(\R)$ by imposing a bound on the $\ell_1$ norm of these coefficients. If one further assumes that the signal is sparse in the Haar basis, one can recast the reconstruction problem as 
\begin{align}
\label{Eq:wav}
\min_{w \in \ell_1(\Z)}  \|w\|_1 \quad\mbox{ s.t. }\quad \sum_{m=1}^M \big|y_m-\langle \nu_m, \sum_{n} w_n \psi_n\rangle \big|^2 \le \epsilon^2,
\end{align}
which is the infinite-dimensional counterpart of the synthesis formulation \eqref{Eq:CS1}. The key question is to derive conditions on how to choose the $\nu_m$ to guarantee recovery of wavelet coefficients up to a certain scale. This has been done in \cite{Adcock2011, Adcock2013}, which means that the issue of convergence is now reasonably well understood for the synthesis form of the recovery problem.
}

\review{
In our framework, $\Spc M_\Dop(\R)$ is the space of functions on $\R$ of bounded (total) variation, which is slightly larger than ${\rm BV}(\R)$ because it also includes constant signals. This allows us to close the circle
by enforcing a regularization on the ``true'' total variation of the solution, \revise{which is associated with the derivative operator
$\Dop=\frac{\dint }{\dint x}$}. 
This results in the functional optimization problem
\begin{align}
\revise{s}=\arg \min_{f \in \Spc M_\Dop(\R)}  {\rm TV}(f)=\|\Dop f\|_{\Spc M} \quad\mbox{ s.t. }\quad \revise{\|\M y-\V \nu(f)\|_2^2=}\sum_{m=1}^M \big|y_m-\langle \nu_m, f \rangle \big|^2 \le \epsilon^2,
\label{eq:TVprob}
\end{align}
which is the continuous-domain counterpart of \eqref{Eq:anal}. 
Now, the motivation for our present theory is that \eqref{eq:TVprob} corresponds to
a special case of Theorem \ref{Theo:L1spline} with $\Lop=\Dop$ and the closed compact convex set $\Spc C\subset \R^M$
being specified by the
inequality on the right-hand-side of \eqref{eq:TVprob}; that is, $\Spc C_{\epsilon}(\M y) =\{\M z \in \R^M: \revise{\|\M y-\M z\|_2^2}\le \epsilon^2\} $.
The key is that the differentiation operator $\Dop$ is spline-admissible in the sense of Definition \ref{Def:splineadmis}:
Its causal Green's function is
the Heaviside (or unit-step) function $\rho_\Dop(x)=\One_+(x)$ whose rate of growth is $n_0=0$, while its null space
$\Spc N_\Dop={\rm span}\{p_1\}$ with $p_1(x)=1$ is 
composed of all constant-valued signals. This implies that the extreme points of
\eqref{eq:TVprob} necessarily take the form
\begin{align}
\label{Eq:Dspline}
s(x)=b_1  + \sum_{k =1}^K a_k\One_+(x-x_k)
\end{align}
with $K\le M$. This corresponds to a piecewise-constant signal with jumps of size $a_k$ at the $x_k$, as illustrated in Figure 1. The solution also happens to be a polynomial spline of degree $0$ with knots at the $x_k$ with the property that $\Dop\{s\}=\sum_{k=1}^K
a_k\delta(\cdot-x_k)=w_\delta$, which is a weighted sum of shifted Dirac impulses (the innovation of the spline), as shown on the bottom of Figure 1.
\begin{figure}[t]
\centerline{\includegraphics[width=0.5\linewidth]{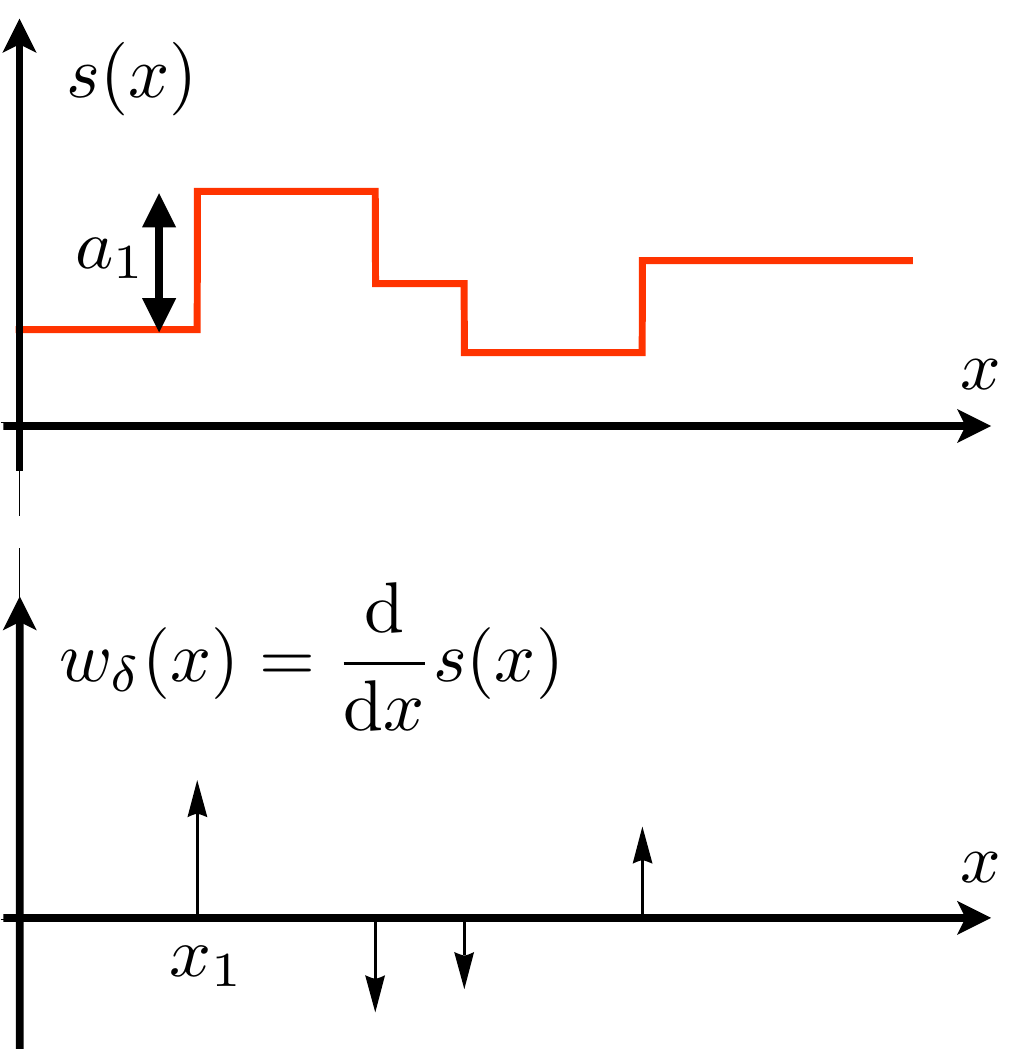}}
\caption{Prototypical solution of a linear inverse problem with total-variation regularization. The signal is piecewise-constant; in other words, it is a nonuniform $\Lop$-spline with $\Lop=\Dop$ (derivative operator). The application of $\Dop$ uncovers the innovation $w_\delta$: The Dirac impulses are located at the points of discontinuity (knots), while their height (weight) encodes the magnitude of the corresponding jump.\label{contantspline}}
\end{figure}
In view of Definition \ref{Def:spline}, the solution \eqref{Eq:Dspline} can also be described 
as a nonuniform $\Lop$-spline with $\Lop=\Dop$. The remarkable aspect of this result is that the parametric form \eqref{Eq:Dspline} is universal,
in the sense that it does not dependent on the measurement functionals $\nu_m$. To the best of our knowledge, this is the first mathematical explanation of the well-known observation that TV regularization tends to enforce piecewise-constant solutions. The other interesting point is that
one can interpret the solution as the best $K$-term representation of the signal within an infinite-dimensional
dictionary that consists of a constant signal $p_1$ plus a continuum of shifted Green's functions ({\em i.e.}, $\{\One_+(\cdot-\tau)\}_{\tau \in\R}$), making the connection with the synthesis views \eqref{Eq:CS1}  and \eqref{Eq:wav} of the problem.
Also, note that the described sparsifying effect is much more dramatic than that of the finite-dimensional setting since one is collapsing a continuum (integral representation) into a discrete and finite sum.
}

\review{We shall now show that the mechanism at play is very general and transposable to a much broader class of regularization operators $\Lop$ and data-fidelity terms, as well as for the multidimensional setting.
}

\review{
\subsection{Road Map of the Paper}
The remainder of this paper is organized as follows: After setting the \revise{notation}, we present and discuss of our main representer theorem (Theorem \ref{Theo:L1spline}) in Section \ref{Sec:RepTheo}. We also provide a refined version for the simpler interpolation scenario (Theorem \ref{Theo:L1spline2}). We then proceed with the review of 
primary applications in Section \ref{Sec:Appli}. 
}

\review{
The mathematical tools for proving our results are developed in the second half of the paper. The first enabling component is the tight connection between splines and operators, which is the topic of Section \ref{Sec:Splines}. In particular, we present an operator-based method to synthesize a spline from its innovation, which requires the construction of an appropriate right-inverse operator (Theorem \ref{Theo:inverse}).
The existence of such inverse operators is fundamental to the characterization of the native spaces associated
with our generalized total-variation criterion (gTV) (Theorem \ref{Theo:gBeppoLevi}), as we show in Section \ref{Sec:Space}. 
The actual proof of Theorems \ref{Theo:L1spline} and \ref{Theo:L1spline2} is given in Section \ref{Sec:ProofgTVTheorems}. It relies on a preparatory result (generalized Fisher-Jerome theorem) that establishes the impulsive form of the solutions of some abstract minimization problem over the space $\Spc M(\R^d)$  of bounded Borel measures. 
}

\review{
We conclude the paper in Section \ref{Sec:OpenProbs} with a brief discussion of open issues.
}
\section{Representer Theorems for Generalized Total Variation}
\label{Sec:RepTheo}
 \review{
Although we are considering a finite number of measurements, we are formulating the reconstruction problem in the continuous domain. This calls for a precise specification of the underlying functional setting.}
\subsection{\revise{Notation}} The space of tempered distribution is denoted by $\Spc S'(\R^d)$ where $d$ gives the number of dimensions. This space is made of continuous linear functionals $\mu: \varphi \mapsto \langle \mu, \varphi\rangle$ acting on the Schwartz' space $\Spc S(\R^d)$ of smooth
and rapidly decaying test functions on $\R^d$ \cite{Gelfand-Shilov1964, Hormander1990}.

\review{ We shall primarily work with
 the space $\Spc M(\R^d)$ of regular, real-valued, countably additive Borel measures on $\R^d$, which is also known (by the Riesz-Markov theorem) to be the continuous dual of $C_0(\R^d)$: the Banach space of continuous functions on $\R^d$ that vanish at infinity equipped with the supremum norm $\|\cdot\|_{\infty}$  \cite[Chap. 6]{Rudin1987}.
Since $\Spc S(\R^d)$ is dense in $C_0(\R^d)$, this allows us to define $\Spc M(\R^d)$ as
 \begin{align}
\Spc M(\R^d)=\{w \in \Spc S'(\R^d):\|w\|_{\Spc M}=\sup_{\varphi \in \Spc S(\R^d): \|\varphi\|_\infty=1} \langle w,\varphi\rangle<\infty\},
\label{Eq:DefM}
\end{align}
and also to extend the space of test functions to $\varphi \in C_0(\R^d)$. The action of $w$ will be denoted
by
$\varphi \mapsto \langle w,\varphi\rangle=\int_{\R^d} \varphi(\bx) w(\bx)\dint \V x$ where the right-hand side stands for the Lebesgue integral of $\varphi$ with respect to the underlying measure\footnote{The use of $w(\bx)\dint \V x$ in the integral is a slight abuse of notation when the measure is not absolutely continuous with respect to the Lebesgue measure. 
}. 
The bottom line is that $\Spc M(\R^d)$ is the Banach space associated with the norm 
$\|\cdot\|_{\Spc M}$ which returns the ``total variation'' of the measure that specifies $w$.}

Two key observations in relation to our goal are:
\begin{enumerate}
\item  the compatibility of the $L_1$ and total-variation norms with the former being stronger than the latter. Indeed, $\|f\|_{L_1(\R^d)}=\|f\|_{\Spc M}$ for all
$f \in L_1(\R^d)$;
\item the inclusion
of Dirac impulses in $\Spc M(\R^d)$, but not in $L_1(\R^d)$.
Specifically, $\delta(\cdot-\V x_0) \in \Spc M(\R^d)$ for any fixed offset $\V x_0 \in\R^d$ with $ \langle \delta(\cdot-\V x_0),\varphi\rangle=\varphi(\V x_0)$ for all $\varphi \in C_0(\R^d)$.
\end{enumerate}

We shall monitor the algebraic rate of growth/decay of (ordinary) functions of the variable $\V x \in \R^d$ by verifying their inclusion in the Banach space
\review{
\begin{align}
L_{\infty,\alpha_0}(\R^d)=\{f: \R^d\to \R \  \mbox{  s.t.  }  \ \|f\|_{\infty,\alpha_0}<+\infty\},
\label{Eq:Linfty}
\end{align}
where
$$\|f\|_{\infty,\alpha_0}=\esssup_{\bx \inR^d} \left(|f(\bx)|(1+\|\bx\|)^{-\alpha_0}\right)$$
with $\alpha_0 \in \R$. For instance, $\V x^\V m=x_1^{m_1}\cdots x_d^{m_d} \in L_{\infty,\alpha_0}(\R^d)$ for $\alpha_0\geq |\V m|=m_1+\dots+m_d$.
}

\review{
A linear operator whose output is a function is represented with a roman capital letter ({\em e.g.}, $\Lop$).
The action of $\Lop$ on the signal $s$ is denoted by $s \mapsto \Lop\{s\}$, or $\Lop s$ for short. Such an operator is said to be {\em shift-invariant}
if it commutes with the shift operator $s \mapsto s(\cdot-\bx_0)$; that is, if $\Op L\{s(\cdot-\bx_0)\}=\Op L\{s\}(\cdot-\bx_0)$ for any admissible signal $s$
and $\bx_0 \in\R^d$.
}
\subsection{Main Result on the Optimality of Splines}
 
 \review{Since the solution is regularized, the constrained minimization is performed over some native space $\Spc M_\Lop(\R^d)$
that is tied to some admissible differential operator $\Lop$, such as $\Dop$, $\Dop^2$ (second derivative), or $\Delta$ (Laplacian) for $d>1$.
}

 \begin{definition} [Spline-admissible operator]
\label{Def:splineadmis}
A linear operator $\Op L: \Spc M_\Lop(\R^d) \to \Spc M(\R^d)$, where $\Spc M_\Lop(\R^d)\supset \Spc S(\R^d)$ is an appropriate subspace of $\Spc S'(\R^d)$, is called {\em spline-admis\-sible}
if 
\begin{enumerate}
\item it is shift-invariant;
\item there exists a function $\rho_{\Op L}: \R^d \to\R $ of slow growth (the Green's function of $\Op L$) such that
$\Lop\{\rho_{\Op L}\}=\delta$, where $\delta$ is the Dirac impulse. The rate of polynomial growth of 
$\rho_{\Op L}$ is $n_0=\inf\{n \in \N: \rho_\Lop \in L_{\infty,n}(\R^d)\}$.
\item the (growth-restricted) null space of $\Lop$, $$\Spc N_\Lop=\{q \in L_{\infty,n_0}(\R^d): \Lop\{q\}=0\},$$
has the finite dimension $N_0\ge0$.
\end{enumerate}
\end{definition}


 \review{
 The native space of $\Lop$, $\Spc M_{\Lop}(\R^d)$, is then specified as
\begin{align}
\label{Eq:native}
\Spc M_{\Lop}(\R^d)=\{f \in L_{\infty, n_0}(\R^d): \|\Lop f\|_{\Spc M}<\infty\}.
\end{align}
It is largest function space for which the generalized total variation $$
{\rm gTV}(f)=\|\Lop f\|_{\Spc M}
$$
is well-defined under the finite-dimensional null-space constraint $$ \|\Lop f\|_{\Spc M}=0 \Leftrightarrow f \in \Spc N_{\Lop}, \mbox{  for any }f \in \Spc M_{\Lop}(\R^d).$$
This also means that ${\rm gTV}$ is only a semi-norm on $\Spc M_\Lop(\R^d)$. However, it can be turned into a proper norm by factoring out the null space of $\Lop$.
We rely on this property and the finite dimensionality of $\Spc N_\Lop$  to prove that $\Spc M_{\Lop}(\R^d)$ is a {\em bona fide} Banach space (see Theorem \ref{Theo:gBeppoLevi}).} 

{Having set the functional context, we now 
state our primary representer theorem.
}


\setcounter{theorem}{0}
\begin{theorem}[gTV optimality of splines for linear inverse problems]
\label{Theo:L1spline}
Let us as\-sume that the following conditions are met:
\begin{enumerate}
\item The regularization operator ${\Lop:\Spc M_{\Lop}(\R^d)\to\Spc M(\R^d)}$ is spline-admissible in the sense of Definition \ref{Def:splineadmis}. 
\item The linear measurement operator $\V \nu: f \mapsto \V \nu(f)=\big(\langle \nu_1, f\rangle, \ldots, \langle \nu_M, f\rangle\big)$ maps $\Spc M_{\Lop}(\R^d) \to \R^M$ and is weak*-continuous on $\Spc M_\Lop(\R^d)=\big( C_\Lop(\R^d)\big)'$.
\item \review{The recovery
problem is well-posed over the null space of $\Lop$: $\V \nu(q_1)=\V \nu(q_2) \Leftrightarrow q_1=q_2$, for any $q_1,q_2 \in \Spc N_\Lop$.}
\end{enumerate}
%
Then, the extremal points of the general constrained minimization problem
\begin{align}
\label{eq:genproblem}
\beta=\min_{f \in \Spc M_{\Lop}(\R^d)} \|\Lop f\|_{\Spc M}\quad \mbox{   s.t.   } \quad \V \nu(f) \in \Spc C,
\end{align}
where $\Spc C$ is any (feasible) convex compact subset of $\R^M$, are necessarily nonuniform $\Lop$-splines of the  form
\begin{align}
\label{eq:spline}
s(\bx)=\sum_{k=1}^K a_k \rho_\Lop(\bx-\bx_k)+\sum_{n=1}^{N_0} b_n p_n(\bx)
\end{align}
with parameters $K\le M$, $\{\bx_k\}_{k=1}^K$ with $\V x_k \in \R^d$,  $\M a=(a_1,\ldots,a_K)\in \R^K$, and $\M b=(b_1,\ldots,b_{N_0})\in \R^{N_0}$. Here,
$\{p_n\}_{n=1}^{N_0}$ is a basis of $\Spc N_\Lop$ and $\Lop\{\rho_\Lop\}=\delta$ so that $\beta=\|\Lop s\|_{\Spc M}=\sum_{k=1}^K |a_k|=\|\M a\|_1$. The full solution set of \eqref{eq:genproblem} is the convex hull of those extremal points.
\end{theorem}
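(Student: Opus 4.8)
The plan is to reduce the native-space problem \eqref{eq:genproblem} to an equivalent total-variation minimization over the measure space $\Spc M(\R^d)$, solve the latter with a Fisher--Jerome-type argument, and then pull the solution back through a right-inverse of $\Lop$. First I would invoke the direct-sum structure of $\Spc M_\Lop(\R^d)$ furnished by Theorem \ref{Theo:gBeppoLevi} together with the right-inverse operator $\Lop_\phi^{-1}$ of Theorem \ref{Theo:inverse}: every $f \in \Spc M_\Lop(\R^d)$ admits a unique decomposition $f = \Lop_\phi^{-1} w + \sum_{n=1}^{N_0} b_n p_n$ with $w = \Lop f \in \Spc M(\R^d)$ and $\M b \in \R^{N_0}$, and this correspondence $f \leftrightarrow (w,\M b)$ is a Banach-space isomorphism. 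Under it the objective becomes $\|\Lop f\|_{\Spc M} = \|w\|_{\Spc M}$, while the constraint reads $\V\nu(f) = \V\nu(\Lop_\phi^{-1} w) + \M G \M b \in \Spc C$, where $\M G \in \R^{M \times N_0}$ has entries $\langle \nu_m, p_n\rangle$ and is injective by hypothesis (3). The technical point to verify here is that the composed functionals $w \mapsto \V\nu(\Lop_\phi^{-1} w)$ remain weak*-continuous on $\Spc M(\R^d)$; this should follow from the weak*-continuity assumption (2) on $\V\nu$ and the boundedness of $\Lop_\phi^{-1}$.

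Next I would apply the generalized Fisher--Jerome theorem to the reduced problem of minimizing $\|w\|_{\Spc M}$ over $w \in \Spc M(\R^d)$ subject to the $M$ affine weak*-continuous constraints that place $\V\nu(\Lop_\phi^{-1}w) + \M G\M b$ in the convex compact set $\Spc C$. That result asserts that the extreme points of its (nonempty, by feasibility and coercivity) solution set are atomic measures $w = \sum_{k=1}^K a_k \delta(\cdot - \bx_k)$ whose number of masses is bounded by the number of measurement constraints, $K \le M$, with $\|w\|_{\Spc M} = \sum_{k=1}^K |a_k| = \|\M a\|_1$ for distinct knots.

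To return to the native space, I would apply $\Lop_\phi^{-1}$ to each Dirac. Since $\Lop\bigl(\Lop_\phi^{-1}\delta(\cdot-\bx_k)\bigr) = \delta(\cdot-\bx_k) = \Lop\bigl(\rho_\Lop(\cdot-\bx_k)\bigr)$, the two functions differ by a null-space element: $\Lop_\phi^{-1}\delta(\cdot-\bx_k) = \rho_\Lop(\cdot-\bx_k) + q_k$ with $q_k \in \Spc N_\Lop$. Substituting and absorbing $\sum_k a_k q_k$ together with the original null-space term into a single combination $\sum_{n=1}^{N_0} b_n p_n$ yields exactly the claimed form \eqref{eq:spline}, and $\beta = \|\Lop s\|_{\Spc M} = \|\M a\|_1$. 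Hypothesis (3) guarantees that, once the $a_k$ and $\bx_k$ are fixed, the coefficients $\M b$ are uniquely pinned down by the measurements through the injective $\M G$, so the null space introduces no additional extreme directions that could inflate the knot count beyond $M$.

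Finally I would settle the extreme-point correspondence and the convex-hull statement. Because the reduction $f \leftrightarrow (w,\M b)$ is bicontinuous for the relevant weak* topologies, extreme points of the solution set of \eqref{eq:genproblem} correspond to extreme points of the reduced measure problem, hence to the atomic $\Lop$-splines above. The full solution set is convex and weak*-compact: the sublevel set of $\|\Lop\,\cdot\|_{\Spc M}$ is bounded and weak*-closed in $\Spc M_\Lop(\R^d) = (C_\Lop(\R^d))'$ by Banach--Alaoglu, and $\Spc C$ is compact, so Krein--Milman identifies the solution set with the closed convex hull of its extreme points. The main obstacle I anticipate is precisely this extreme-point bookkeeping: making the passage between extreme points of the two problems rigorous and, within the Fisher--Jerome step, showing that the extra $N_0$ null-space degrees of freedom cannot create optimal atomic measures with more than $M$ knots---this is where the well-posedness condition (3) does the essential work.
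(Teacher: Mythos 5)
Your proposal is correct and takes essentially the same route as the paper: the unique decomposition $f=\Lop_{\V \phi}^{-1}w+p$ from Theorem \ref{Theo:gBeppoLevi}, reduction of the constrained problem to the generalized Fisher--Jerome theorem (Theorem \ref{Theo:gFisher}) on $\Spc M(\R^d)\oplus\Spc N_\Lop$ with the frame bound supplied by Condition 3 (Proposition \ref{Prop:lowerbound}), and pull-back of the atomic extreme points through the right-inverse. The one technical point you flag---weak*-continuity of $w\mapsto \V \nu(\Lop_{\V \phi}^{-1}w)$---is exactly what the paper settles via Theorem \ref{Theo:Predual}, which shows that $\Lop_{\V \phi}^{-1\ast}$ maps $C_\Lop(\R^d)$ isometrically into $C_0(\R^d)$; note that norm-boundedness of $\Lop_{\V \phi}^{-1}$ alone would not yield weak*-to-weak* continuity, so this adjoint/predual structure is genuinely needed.
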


Theorem \ref{Theo:L1spline} is a powerful existence result that points towards the universality of 
nonuniform $\Lop$-spline solutions. \review{ The key property here is $\Lop\{s\}=\sum_{k=1}^K a_k \delta(\cdot-\bx_k)$, which follows from Conditions 1-3 in Definition \ref{Def:splineadmis} and is consistent with 
the more detailed characterization of splines presented in Section \ref{Sec:Splines}.
For the time being, it suffices to remark that these splines are smooth ({\em i.e.}, infinitely differentiable) everywhere, except at their knot locations $\{\bx_k\}$.}

Although the extremal problem is defined over a continuum, the remarkable outcome is that the
problem admits solutions that are intrinsically sparse, with the level of sparsity being measured by the minimum number $K$ of required spline knots.
In particular, this explains why the solution of a problem with a TV/$L_1$-type constraint on the derivative (resp., the second derivative) is piecewise-constant (resp., piecewise linear when $\Lop=\Dop^2$) with breakpoints at $x_k$. The other pleasing aspect is the direct connection between the functional concept of generalized TV and the $\ell_1$-norm of the expansion coefficients $\M a$.

We observe that the solution is made up of two components: an adaptive one that is specified by $\{\bx_k\}$ and $\M a$, and a linear regression term (with expansion coefficients $\M b$) that describes the component in the null space of the operator. Since $\M b$ does not contribute to $\|\Lop s\|_{\Spc M}$, the optimization tends to maximize the
contribution of the null-space component. The main difficulty in finding the optimal solution is that $K$ and $(\bx_k)$ are problem-dependent and unknown {\em a priori}. 

\review{
We have mentioned in Section \ref{Sec:linkTV} that the semi-norm $\|\Dop f\|_{\Spc M}$ yields the classical total variation of a function in 1D. Unfortunately, there is no such direct connection for $d>1$, the reason being that the multidimensional gradient $\V \nabla$ is not spline-admissible because it is a vector operator. Instead, as a proxy for the popular total variation of Rudin and Osher \cite{Rudin1992}, we suggest using the (fractional) Laplacian semi-norm $\|(-\Delta)^{\gamma/2}f\|_{\Spc M}$ with $\gamma\ge d$, which is endowed with the same invariance and null-space properties. According to Theorem \ref{Theo:L1spline}, such a $\gamma$th-order regularization results in extremal points that are nonuniform polyharmonic splines \cite{Duchon1977,Madych1990}.}

\subsection{Connection with Unconstrained Problem}
The statement in Theorem \ref{Theo:L1spline} is remarkably general. In particular, it covers the
generic regularized least-squares problem
\begin{align}
\label{eq:l1min}
f_\lambda=\arg \min_{f \in \Spc M_{\Lop}(\R^d)} \left(\sum_{m=1}^M |y_m - \langle \nu_m,f\rangle|^2 + \lambda \|\Lop f\|_{\Spc M} \right),
\end{align}
which is commonly used to formulate linear inverse/compressed-sensing problems \cite{Bruckstein2009, Candes2007, Donoho2006, Elad2010b, Figueiredo2003}. 
The connection is obtained by taking $\Spc C=\{\M z \in \R^M: \|\M y -Ê\M z\|_2^2\le \epsilon^2\}=B(\M y; \epsilon)$,
which is a ball of diameter $\epsilon$ centered on the measurement vector $\M y=(y_1,\ldots,y_M)$.
Indeed, since the data-fidelity term is (strictly) convex, the extreme points $s_\epsilon$ of \eqref{eq:genproblem} saturate the inequality such that $\|\M y-\V \nu(s_\epsilon)\|_2^2=\epsilon^2$ and gTV is minimized with $\alpha=\alpha(\epsilon)=\|\Lop s_\epsilon\|_{\Spc M}$. In the unconstrained form \eqref{eq:l1min}, the selection of a fixed $\lambda  \in \R^+$ results in a particular value of the data error $\|\M y-\V \nu(f_\lambda)\|_2^2=\epsilon'(\lambda)$ with the optimal solution $f_\lambda=s_{\epsilon'}$ having the same total variation as if we were looking at the primary problem \eqref{eq:genproblem}  with $\Spc C=B(\M y; \epsilon')$. 

To get further insights on the optimization problem \eqref{eq:l1min}, we can look at two limit cases.
When $\lambda\to \infty$, the solution must take the form $f_\infty=p \in \Spc N_\Lop$ so that $ \|\Lop f_\infty\|_{\Spc M}=0$.
It then follows that  $\|\M y-\V \nu(f_\infty)\|_2^2\le\|\M y\|^2<\infty$. 
On the contrary, when $\lambda\to 0$, the minimization will force the data term $\|\M y-\V \nu(f_0)\|_2^2$ to vanish. Theorem \ref{Theo:L1spline} then ensures the existence of a nonuniform  ``interpolating''
$\Lop$-spline $f_0(\bx)$ with $\V \nu(f_0)=\M y$ and minimum gTV semi-norm. 

\review{\subsection{Generalized Interpolation}
In the latter interpolation scenario, the convex set $\Spc C$ reduces to the single point $\Spc C=\{\M y \in \R^M\}$. This configuration is of special theoretical relevance because it enables us to refine our upper bound on the number $K$ of spline knots.
\setcounter{theorem}{1}
 \begin{theorem}[Generalized spline interpolant]
\label{Theo:L1spline2}
Under Assumptions 1-3 of Theorem \ref{Theo:L1spline}, the extremal points of the (feasible) generalized interpolation problem
\begin{align}
\label{eq:genproblem2}
\arg \min_{f \in \Spc M_{\Lop}(\R^d)} \|\Lop f\|_{\Spc M}\quad \mbox{   s.t.   } \quad \V \nu(f)=\M y
\end{align}
are nonuniform $\Lop$-splines of the same form \eqref{eq:spline} as in Theorem \ref{Theo:L1spline}, but with $K\le (M-N_0)$.
\end{theorem}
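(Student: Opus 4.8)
The plan is to run the argument of Theorem \ref{Theo:L1spline} essentially unchanged, but to exploit the fact that, for a single-point constraint $\Spc C=\{\M y\}$, the $N_0$ degrees of freedom carried by the null-space component can be used to \emph{absorb} $N_0$ of the $M$ scalar constraints, leaving only $M-N_0$ effective constraints on the innovation. Concretely, I would parametrize any candidate $f\in\Spc M_\Lop(\R^d)$ through its innovation $w=\Lop f\in\Spc M(\R^d)$ together with the direct-sum decomposition of the native space established in Theorem \ref{Theo:gBeppoLevi}: writing $\Lop^{-1}_\phi$ for the stable right inverse of Theorem \ref{Theo:inverse}, every $f$ admits the unique representation $f=\Lop^{-1}_\phi w+\sum_{n=1}^{N_0}b_n p_n$ with $\Lop f=w$, so that $\|\Lop f\|_{\Spc M}=\|w\|_{\Spc M}$. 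This is precisely the reduction that powers Theorem \ref{Theo:L1spline}; the only new task is to recount the constraints in the interpolation regime.

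Next I would make the constraint explicit in these coordinates. Setting $\M G\in\R^{M\times N_0}$ with entries $[\M G]_{m,n}=\langle\nu_m,p_n\rangle$, the interpolation condition $\V\nu(f)=\M y$ reads
\[
\V\nu\bigl(\Lop^{-1}_\phi w\bigr)+\M G\,\M b=\M y .
\]
Assumption 3 of Theorem \ref{Theo:L1spline} (well-posedness over $\Spc N_\Lop$) is exactly the statement that $\M G$ has full column rank $N_0$. Let $\M P$ denote the orthogonal projector onto $(\operatorname{range}\M G)^\perp\subset\R^M$, so that $\M P$ has rank $M-N_0$ and $\M P\M G=\M{0}$. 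For fixed $w$, the displayed equation is solvable in $\M b$ if and only if $\M P\bigl(\M y-\V\nu(\Lop^{-1}_\phi w)\bigr)=\M{0}$, in which case $\M b$ is unique. Hence the interpolation problem is equivalent to minimizing $\|w\|_{\Spc M}$ over $\Spc M(\R^d)$ subject to the reduced system
\[
\M P\,\V\nu\bigl(\Lop^{-1}_\phi w\bigr)=\M P\,\M y ,
\]
which, upon pairing against an orthonormal basis of $(\operatorname{range}\M G)^\perp$, amounts to exactly $M-N_0$ scalar linear constraints on $w$. I would then check that each functional $w\mapsto\bigl[\M P\,\V\nu(\Lop^{-1}_\phi w)\bigr]_i$ is weak*-continuous on $\Spc M(\R^d)$, which follows from the weak*-continuity of $\V\nu$ (Assumption 2) composed with the bounded linear maps $\Lop^{-1}_\phi$ and $\M P$.

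With the constraint count reduced to $M-N_0$, I would invoke the same extreme-point characterization (the generalized Fisher--Jerome theorem used to prove Theorem \ref{Theo:L1spline}): the extreme points of the minimization of $\|w\|_{\Spc M}$ subject to these $M-N_0$ weak*-continuous linear constraints are finite sums of weighted Dirac masses $w=\sum_{k=1}^K a_k\,\delta(\cdot-\bx_k)$ with $K\le M-N_0$. Reconstructing through $s=\Lop^{-1}_\phi w+\sum_n b_n p_n$ and using $\Lop^{-1}_\phi\delta(\cdot-\bx_k)=\rho_\Lop(\cdot-\bx_k)$ (modulo null-space terms reabsorbed into the $b_n$) yields the announced spline form \eqref{eq:spline} with $K\le M-N_0$, while $\M b$ is recovered uniquely from $\M G\,\M b=\M y-\V\nu(\Lop^{-1}_\phi w)$. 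The main obstacle I anticipate is the bookkeeping of the extreme-point correspondence: one must verify that extreme points of the reduced $w$-problem genuinely lift to extreme points of the original interpolation problem over $\Spc M_\Lop(\R^d)$. This is handled by observing that the reconstruction map $w\mapsto(w,\M b(w))\mapsto f$ is affine and injective on the reduced feasible set, hence carries extreme points to extreme points; combined with the preservation of the objective $\|\Lop f\|_{\Spc M}=\|w\|_{\Spc M}$ and the assumed feasibility, this closes the argument.
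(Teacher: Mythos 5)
Your proof is correct, and it hinges on the same key lemma as the paper's: both arguments reduce the interpolation problem to minimizing $\|w\|_{\Spc M}$ over $\Spc M(\R^d)$ subject to only $M-N_0$ scalar constraints, and then apply the generalized Fisher--Jerome theorem (Theorem \ref{Theo:gFisher}) with trivial null space $\Spc N=\{0\}$. The routes to that reduction differ, however. The paper adapts the \emph{decomposition} to the data: it extracts $N_0$ measurement functionals $\V \nu_0$ whose restriction to $\Spc N_\Lop$ is invertible, forms the measurement-adapted biorthogonal system $\V \phi_0=\M P_0^{-1}\V \nu_0$, and exploits the resulting identity $\V \nu_0(\Lop_{\V \phi_0}^{-1}w)=\M 0$ for all $w$; consequently the null-space component $q_0$ is a \emph{fixed} function determined by the first $N_0$ data values alone, the solution set is simply the translate by $q_0$ of $\Lop_{\V \phi_0}^{-1}$ applied to the reduced solution set, and the extreme-point correspondence is immediate. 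You instead adapt the \emph{constraints} to a fixed, arbitrary decomposition: projecting the constraint equation onto $(\operatorname{range}\M G)^\perp$ eliminates $\M b$, at the price that $\M b(w)$ now varies affinely with $w$, which is why you need---and correctly supply---the argument that an injective affine parametrization carries extreme points to extreme points (a step the paper's fixed-translate structure makes automatic and that it leaves implicit in its final sentence). Your variant buys modularity: no WLOG selection and reordering of measurements, and any biorthogonal system works; the paper's variant buys the cleaner fixed-translate structure and, as a by-product, the explicit construction of a biorthogonal system promised after Definition \ref{Def:basis}. One small precision to tighten in your write-up: the weak*-continuity of $w\mapsto \V\nu(\Lop_{\V \phi}^{-1}w)$ does not follow from mere boundedness of $\Lop_{\V \phi}^{-1}$ and $\M P$ (a bounded map between dual spaces need not be weak*-to-weak*-continuous); it follows from the adjoint identity $\langle \nu_m,\Lop_{\V \phi}^{-1}w\rangle=\langle \Lop_{\V \phi}^{-1\ast}\nu_m,w\rangle$ with $\Lop_{\V \phi}^{-1\ast}\nu_m\in C_0(\R^d)$, exactly as in the paper's proof of Theorem \ref{Theo:L1spline} via Theorem \ref{Theo:Predual}.
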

}
\section{Application Areas}
\label{Sec:Appli}

\review{ 
We first briefly comment on the
admissibility conditions in Theorem \ref{Theo:L1spline} and indicate that the restrictions are minimal. 
To the best of our knowledge, the continuity of the measurement operator $\V \nu$ is a necessary requirement for the mathematical analysis of any inverse problem. The difficulty here is that our native space $\Spc M_\Lop(\R^d)=(C_\Lop(\R^d))'$ is non-reflexive, which forces us to rely on the 
weak*-topology. The continuity requirement in Theorem 1 is therefore equivalent to $\nu_m \in C_\Lop(\R^d)$ for $ m=1,\dots,M$ where the Banach structure of the predual space $C_\Lop(\R^d)$ is laid out in Theorem \ref{Theo:Predual}. In particular, we refer to the norm inequality
\eqref{EQ:NormInecL1}, which suggests that Condition 2 in Theorem \ref{Theo:L1spline} is met by picking $\nu_m \in L_{1,-n_0}(\R^d)$ where the latter is the Banach space associated with the weighted $L_1$-norm
\begin{align}
\label{Eq:L1n0}
\|f\|_{L_{1,-n_0}}=\int_{\R^d} |f(\bx)|(1+\|\bx\|)^{n_0}\dint \bx.
\end{align}
\revise{In fact, $L_{1,-n_0}(\R^d)$ is the predual of the space $L_{\infty,n_0}(\R^d)$ defined by \eqref{Eq:Linfty}, which implies that $\Spc M_\Lop(\R^d)=\big(C_\Lop(\R^d)\big)'\subset \big(L_{1,-n_0}(\R^d) \big)'=L_{\infty,n_0}(\R^d)$.} 
The condition $\nu_m \in L_{1,-n_0}(\R^d)$ is a mild algebraic decay requirement that turns out to be satisfied by the impulse response of most physical devices.
As for the requirement that the inverse problem is well defined over the null space of $\Lop$ (Condition 3), it a prerequisite to the success of any regularization scheme. Otherwise, there is simply no hope of turning an ill-posed problem into a well-posed one.
For instance, in the introductory example with classical total-variation regularization, the constraint is that $\V \nu$ should have at least one component $\nu_m$ such that 
$\langle \nu_m,1\rangle\ne 0$ which, again, is very mild requirement.}

Next, we discuss examples of signal recovery that are covered by Theorems \ref{Theo:L1spline} and \ref{Theo:L1spline2}.
The standard setting is that one is given a set of noisy measurements $\M y=\V \nu(s) + {\rm ``noise"}$ of an unknown signal $s$ and that one is trying to recover $s$ from $\M y$ based on the solution of \eqref{eq:l1min}, or some variant of the problem involving some other (convex) data term---the most favorable choice being the log likelihood of the measurement noise. We shall then close the discussion section by briefly making the connection with a class of inverse problems in measure space; that is, the case $\Lop={\rm Identity}$.
%
%
%
\subsection{Interpolation}
The task here is to reconstruct a continuous-domain signal from its (possibly noisy) nonuniform samples $\{s(\bx_m)\}_{m=1}^M$, which is achieved by searching for the function $s: \R^d \to \R$ that fits the samples while minimizing $\|\Lop s\|_{\Spc M}$.
This corresponds to the problem setting in Theorem \ref{Theo:L1spline} with $\nu_{m}=\delta(\cdot-\bx_m)$ and $\Spc C=B(\M y; \epsilon)$, where $\M y$ denotes the measurement vector. Hence, the admissibility condition $\nu_m \in C_\Lop(\R^d)$ 
is equivalent to $\revise{(\Lop_{\V \phi}^{-1})^\ast\{\delta(\cdot-\bx_m)\}}=g_{\V \phi}(\V x_m, \cdot) \in C_{0}(\R^d)$, where the boundedness is ensured by the stability condition in Theorem \ref{Theo:inverse}. The more technical continuity requirement is achieved when $\rho_\Lop$ is continuous (H\"older exponent $r_0>0$). This happens when the order of the differential operator is greater than one, which seems to exclude\footnote{We can bypass this somewhat artificial limitation by replacing the ideal sampler by a quasi-ideal sampling device that involves a mollified version of a Dirac impulse.}  
simple operators such as $\Dop$ (piecewise-constant approximation). This limitation notwithstanding, our theoretical results are directly applicable to the problems of adaptive regression splines \cite{Mammen1997} with $\Lop=\Dop^N$, 
the construction of shape-preserving splines \cite{Lavery2000}, as well as a whole range of variations including TV denoising.  \\[-4ex]
%
\subsection{Generalized Sampling} 
The setting is analogous to the previous one, except that the samples are now observed through a sampling aperture 
$\phi\in L_{1,-n_0}(\R^d)$ so that $\nu_{m}={\phi(\cdot-\bx_m)}$ \cite{Elda2015,Unser2000}. The function $\phi$ may, for example, correspond to the point-spread function of a microscope. Then, the recovery problem is equivalent to a deconvolution \cite{Dey2006}. Since the measurements are obtained by integration of $s$ against an ordinary function $\nu_m\in L_{1,-n_0}(\R^d)$, there is no
requirement for the continuity of $\rho_\Lop$ because of the implicit smoothing effect of $\phi$.
This means that essentially no restrictions apply. \\[-4ex]
\subsection{Compressed Sensing}  
The result of Theorem \ref{Theo:L1spline} is highly relevant to compressed sensing, especially since the underlying $L_1$/TV signal-recovery problem is formulated in the continuous domain. We like to view \eqref{eq:spline} as the prototypical form of a piecewise-smooth signal
that is intrinsically sparse with sparsity $K=\|\V a\|_0$. The model also conforms with the notion of a finite rate of innovation  \cite{Vetterli2002}.
If we know that the unknown signal $s$ has such a form, then Theorem  \ref{Theo:L1spline} suggests that we can attempt to recover it from an $M$-dimensional linear measurement $\M y= \V \nu(s)$ by solving the optimization problem \eqref{eq:genproblem} with $\Spc C=B(\V y;\epsilon)$, which is in agreement with the predominant paradigm in the field. 
While the theorem states that $M\ge K$, common sense dictates that we should take $M > N_{\rm freedom}$, where $N_{\rm freedom}=2 K + N_0$ is the number of degrees of freedom of the underlying model. The difficulty, of course, is that a subset of those parameters (the spline knots $\V x_k$) induce a model dependency that is highly nonlinear. 

\subsection{Inverse Problems in the Space of Measures}
\revise{Some of the theoretical results of this paper are also of direct relevance for inverse problems that are formulated in the space $\Spc M(\R^d)$ of measures \cite{Bredies2013}.
The prototypical example is the recovery of the location (with super-resolution precision) of a series of Dirac impulses from noisy measurements, which may be achieved through the continuous-domain minimization of the total variation of the underlying measure \cite{Candes2013b,Denoyelle2015support,Duval2015,Fernandez2016}.
The Fisher-Jerome theorem \cite[Theorem 1]{Fisher1975} as well as our extension for the unbounded domain $\R^d$ and arbitrary convex sets (Theorem \ref{Theo:gFisher}) support this kind of algorithm, as they guarantee the existence of sparse solutions---understood as a sum of Dirac spikes---for this family of problems.}

\section{Splines and Operators}
\label{Sec:Splines}
\review{We now switch to the explanatory part of the presentation. The first important concept that is implicit in the statement of
Theorems \ref{Theo:L1spline} and \ref{Theo:L1spline2} is the powerful association between splines and operators, the idea being that the selection of an admissible operator $\Lop$  specifies a corresponding type of splines  \cite{Schultz1967,Schumaker2007}\cite[Chapter 6]{Unser2014book}.}

Sorted by increasing complexity, the three types of operators that are of relevance to us are: 
(i) ordinary differential operators, which are polynomials of the derivative operator $\Dop=\frac{\dint}{\dint x}$ \cite{Dahmen:Micchelli:1987, Schultz1967, Unser2005}; (ii) partial differential operators such as the Laplacian $\Delta$ (or some polynomial thereof); and (iii) fractional derivatives such as $\Dop^\gamma$ or $(-\Delta)^{\frac{\gamma}{2}}$ with $\gamma\in \R^+$ whose Fourier symbols are $(\jj \omega)^\gamma$ and $\|\bw\|^\gamma$, respectively \cite{Duchon1977,Unser2000a, Unser2007}.
It can be shown that all linear-shift-invariant operators of Type (i) and all elliptic operators of Type (ii) are spline-admissible in the sense of Definition \ref{Def:splineadmis}. \review{This is also known to be true for fractional derivatives and fractional Laplacians with $\gamma\ge d$ \cite{Duchon1977,Unser2000a}.}

\review{Let us mention that the issue of making sure that the null space of the operator $\Lop$ is finite-dimensional is often nontrivial for $d>1$. It is a fundamental aspect that is addressed in the $L_2$ theory of radial basis functions and polyharmonic splines with the definition of the appropriate native spaces \cite[Chapter 10]{Wendland2005}.
Here, we have chosen to bypass some of these technicalities by including a growth restriction \big({\em i.e.}, the condition that $q\in L_{\infty,n_0}(\R^d)$\big) in the definition of $\Spc N_\Lop$}. A fundamental property in that respect is that 
the finite-dimensional null space of a LSI operator can only include exponential polynomial components of the form
$\bx^{\V m}\ee^{\jj \langle\bw_0,\bx\rangle}$, which correspond to a zero of multiplicity at least $|\V m|+1$ of the frequency response $\widehat L(\bw)$ at $\bw=\bw_0$ (see \cite[Proposition 6.1 p. 118]{Unser2014book} \review{and \cite[Section 6]{HelOr1998}}).


Once it is established that $\Lop$ is spline-admissible, one can rely on the following unifying distributional definition of a spline.

\setcounter{theorem}{1}
\begin{definition} [Nonuniform $\Lop$-spline] 
\label{Def:spline}
A function $s: \R^d \to \R$ of slow growth (i.e., $s \in L_{\infty,n_0}(\R^d)$ with $n_0\ge0$) is said to be a nonuniform $\Lop$-spline if
\begin{align}
\label{Eq:inov}
\Lop\{s\}=\sum_{k} a_k\delta(\cdot-\bx_k)=w_\delta,
\end{align}
where $(a_k)$ is a sequence of weights and the Dirac impulses are located at the spline knots 
$\{\bx_k\}$.  
\end{definition}
\review{The generalized function $\Lop\{s\}=w_\delta$ is called the innovation of the spline because it contains the crucial information for its description: the positions $\{\bx_k\}$ of the knots and the amplitudes $(a_k)$ of the corresponding discontinuities.}

\review{The one-dimensional brands of greatest practical interest are the polynomial splines with $\Lop=\Dop^m$ \cite{deBoor1978, Schoenberg1946} and the exponential splines \cite{Dahmen:Micchelli:1987, Schultz1967, Unser2005} with $\Lop=c_m \Dop^m + \dots + c_1 \Dop + c_0 \Op I$, where $\Op I=\Dop^0$ denotes the identity operator. Their multidimensional counterparts are the polyharmonic splines with
$\Lop=(-\Delta)^{\gamma/2}$ \cite{Duchon1977,Madych1990} and the Sobolev splines with $\Lop=(\Op I -\Delta)^{\gamma/2}$ 
for $\gamma\ge d$ \cite{Ward20l4}. The connection with the theory of Sobolev spaces is that the Green's functions of $(-\Delta)^{\gamma/2}$ (resp., $(\Op I -\Delta)^{\gamma/2})$ are the kernels of the Riesz (resp., Bessel) potentials \cite{Grafakos2004}. }

\review
{For a constructive use of Definition \ref{Def:spline}, we also need to be able to re-synthesize the spline $s$ from its innovation.
In the case of our introductory example with $\Lop=\Dop$ (see Figure \ref{contantspline}), one simply integrates $w_\delta$, which yields \eqref{Eq:Dspline} (up to the integration constant $b_1$) owing to the property that
$\Dop^{-1}\{\delta(\cdot-x_k)\}(x)=\int_{-\infty}^x \delta(\tau-x_k)\dint \tau =\One_+(x-x_k)$.
In principle, the same inversion procedure is applicable for the generic operator $\Lop$ and amounts to substituting the $\delta$ distribution in \eqref{Eq:inov}
by the Green's function $\rho_\Lop$.}
The only delicate part is the proper handling of the ``integration constants'' (the part of the solution that lies in the null space of the operator), which is achieved through the specification of $N_0$ linear boundary conditions of the form $\langle \phi_n,s\rangle=0$.
%

\review{We now show that the underlying functionals $\V \phi=(\phi_1,\ldots,\phi_{N_0})$ can be incorporated in the specification of an appropriate right-inverse operator $\Lop^{-1}_\V \phi$.}
\review{Our construction requires that $\V \phi$ first be matched to a basis of $\Spc N_\Lop$ such as to form a biorthogonal system. We note that this is always feasible as long as the $\phi_n$ are linearly independent with respect to $\Spc N_\Lop$. (An explicit construction is given in the proof of Theorem \ref{Theo:L1spline2}.)
\setcounter{theorem}{2}
\begin{definition}
\label{Def:basis}
\revise{The pair} $(\V \phi, \V p)=(\phi_n,\ p_n)_{n=1}^{N_0}$ is called a biorthogonal system for $\Spc N_\Lop\subset\Spc M_\Lop(\R^d)$ if
$\ \{p_n\}_{n=1}^{N_0}$ is a basis of $\Spc N_\Lop$ and the vector of ``boundary'' functionals $\V \phi=(\phi_1,\ldots,\phi_{N_0})$ with $\phi_n \in  \Spc N'_\Lop$
satisfy the biorthogonality condition
$\V \phi(p_n)=\M e_n$ 
where $\M e_n$ is the $n$th element of the canonical basis.
\end{definition}}

\review{
The interest of such a system is that any $q \in \Spc N_{\Lop}$ has a unique representation as
$q=\sum_{n=1}^{N_0} \langle \phi_n, q\rangle p_n$ with associated norm $\|\V \phi(q)\|_2$.
}

\review{The fundamental requirement for our formulation is the stability/continuity of the inverse operator $\Lop^{-1}_\V \phi : \Spc M(\R^d) \to \Spc M_\Lop(\R^d)$. Since $\Spc M_\Lop(\R^d) \subset L_{\infty,n_0}(\R^d)$ by construction, we can control stability by relying on  Theorem \ref{Theo:stableinverse} whose proof is given in Appendix \ref{App:Stable}. 
\setcounter{theorem}{2}
\begin{theorem} 
\label{Theo:stableinverse}
\review{The generic linear operator $\Op G: w \mapsto f=$ ${\int_{\R^d} g(\cdot,\V y) w(\V y)\dint \V y}$
continuously maps $\Spc M(\R^d) \to L_{\infty,\alpha_0}(\R^d)$ with $\alpha_0 \in \R$ if and only if its kernel $g$ is measurable 
and
\begin{align}
\label{Eq:Stable}
\esssup_{\bx, \V y \in \R^d} \left(|g(\V x,\V y)|\;(1+\|\bx\|)^{-\alpha_0}\right) < \infty.
\end{align}}
\end{theorem}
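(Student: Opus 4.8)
The plan is to establish the sharper statement that the operator norm of $\Op G$ from $\Spc M(\R^d)$ to $L_{\infty,\alpha_0}(\R^d)$ equals \emph{exactly} the quantity
$$K \eqdef \esssup_{\bx, \V y \in \R^d} \left(|g(\V x,\V y)|\,(1+\|\bx\|)^{-\alpha_0}\right),$$
from which the equivalence is immediate, since continuity of the linear map $\Op G$ is precisely the finiteness of its operator norm, hence of $K$. I would prove the two inequalities $\|\Op G\|\le K$ and $\|\Op G\|\ge K$ separately, the first giving sufficiency and the second necessity.

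For sufficiency ($\|\Op G\| \le K$), assume $g$ is measurable with $K<\infty$. Then for a.e.\ $\bx$ the slice $g(\bx,\cdot)$ is essentially bounded by $K(1+\|\bx\|)^{\alpha_0}$, hence integrable against the finite measure $w$, so the defining integral is well posed. Bounding it by the total variation of $w$ gives
$$|f(\bx)| \le \int_{\R^d} |g(\bx,\V y)|\,\dint|w|(\V y) \le K\,(1+\|\bx\|)^{\alpha_0}\,\|w\|_{\Spc M},$$
and after multiplying by $(1+\|\bx\|)^{-\alpha_0}$ and taking the essential supremum over $\bx$ one obtains $\|f\|_{\infty,\alpha_0}\le K\,\|w\|_{\Spc M}$, i.e.\ $\|\Op G\|\le K$.

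For necessity ($\|\Op G\|\ge K$, so in particular $K<\infty$ whenever $\Op G$ is continuous), the idea is to probe the kernel with Dirac impulses. Using the inclusion of $\delta(\cdot-\V y_0)$ in $\Spc M(\R^d)$ together with $\|\delta(\cdot-\V y_0)\|_{\Spc M}=1$, the reproducing property yields $\Op G\{\delta(\cdot-\V y_0)\}=g(\cdot,\V y_0)$, so that for every fixed $\V y_0$
$$\esssup_{\bx\in\R^d}\left(|g(\bx,\V y_0)|\,(1+\|\bx\|)^{-\alpha_0}\right)=\|g(\cdot,\V y_0)\|_{\infty,\alpha_0}\le \|\Op G\|.$$
Writing $h(\bx,\V y)\eqdef|g(\bx,\V y)|(1+\|\bx\|)^{-\alpha_0}$, this says that for each $\V y_0$ we have $h(\bx,\V y_0)\le\|\Op G\|$ for a.e.\ $\bx$. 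To promote this to a \emph{joint} almost-everywhere bound, I would apply Tonelli's theorem to $E=\{(\bx,\V y): h(\bx,\V y)>\|\Op G\|\}$: every $\V y$-slice $E_{\V y}$ is Lebesgue-null, so $|E|=\int_{\R^d}|E_{\V y}|\,\dint\V y=0$, and therefore $K=\esssup h\le\|\Op G\|$.

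The main obstacle is this last measure-theoretic step in the necessity direction, namely passing from ``for each fixed $\V y$, the bound holds for almost every $\bx$'' to ``the bound holds for almost every $(\bx,\V y)$.'' This is exactly where the joint measurability of $g$ is indispensable, since it guarantees that $E$ is a measurable subset of $\R^d\times\R^d$ with null $\V y$-slices, allowing Tonelli to conclude $|E|=0$; without joint measurability the slicewise null sets could fail to aggregate. A secondary technical point is the well-posedness of the pairing $\int g(\bx,\cdot)\,\dint w$ for a general, possibly singular, measure $w\in\Spc M(\R^d)$, which is handled by observing that $K<\infty$ forces $g(\bx,\cdot)$ to be essentially bounded for a.e.\ $\bx$, hence a legitimate integrand against a finite Borel measure.
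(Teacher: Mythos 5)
Your proposal is correct and follows essentially the same route as the paper's proof: sufficiency via the direct kernel estimate $\|\Op G w\|_{\infty,\alpha_0}\le K\|w\|_{\Spc M}$, and necessity by probing $\Op G$ with the shifted Dirac impulses $\delta(\cdot-\V y)\in\Spc M(\R^d)$ of unit total variation, yielding in both cases the exact identity $\|\Op G\|=K$. The only difference is that your Tonelli argument on the exceptional set $E=\{(\bx,\V y): |g(\bx,\V y)|(1+\|\bx\|)^{-\alpha_0}>\|\Op G\|\}$ makes explicit the passage from the slicewise bounds (for each fixed $\V y$, a.e.\ $\bx$) to the joint essential supremum, a step the paper asserts without justification, so your version actually tightens the necessity part of its argument.
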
 }

\review{This allows us to characterize the desired operator in term of its Schwartz' kernel  (or generalized impulse response) $g_{\V \phi}(\V x, \V y)=\Lop^{-1}_\V \phi\{\delta(\cdot-\V y) \}(\V x)$.}

\setcounter{theorem}{3}
\review{\begin{theorem}[Stable right-inverse of $\Lop$]
\label{Theo:inverse}
Let $(\phi_n,\ p_n)_{n=1}^{N_0}$ be a biorthogonal system  for $\Spc N_\Lop \subset \Spc M_\Lop(\R^d) \subset L_{\infty,n_0}(\R^d)$. Then, there exists a unique operator $\Lop^{-1}_\V \phi: \varphi \mapsto \Lop^{-1}_\V \phi \varphi=\int_{\R^d} g_{\V \phi}(\cdot ,\V y) \varphi(\V y)\dint\V y$ such that
\begin{align}
\label{Eq:rightinv}\Lop\Lop_\V \phi^{-1}\varphi = \varphi & \qquad(\mbox{right-inverse property})\\
\label{Eq:boundary}\V \phi(\Lop_\V \phi^{-1}\varphi) =\V 0& \qquad(\mbox{boundary condidions})
\end{align}
for all $\varphi \in \Spc S(\R^d) $.
The kernel of this operator is 
\begin{align}
\label{Eq:kernelinverseg}
g_{\V \phi}(\V x,\V y) 
&=\rho_{\Lop}(\V x-\V y)- \sum_{n=1}^{N_0}  p_n(\bx) q_n(\V y),
\end{align}
with $\rho_\Lop$ such that $\Lop\rho_{\Lop}=\delta$ and $q_n(\V y)=\langle \phi_n,\rho_{\Lop}(\cdot-\V y) \rangle$. Moreover, if $g_{\V \phi}$ satisfies the stability condition \eqref{Eq:Stable} with $\alpha_0=n_0$, then $\Lop^{-1}_{\V \phi}$ admits a continuous extension $\Spc M(\R^d)\to L_{\infty,n_0}(\R^d)$ with \eqref{Eq:rightinv} and \eqref{Eq:boundary} remaining valid for all $\varphi \in \Spc M(\R^d)$.
%
%
\end{theorem}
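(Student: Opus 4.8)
The plan is to exhibit the candidate operator directly from the formula \eqref{Eq:kernelinverseg}, verify that it satisfies the two defining relations \eqref{Eq:rightinv}--\eqref{Eq:boundary}, argue that these relations determine it uniquely, and finally invoke Theorem \ref{Theo:stableinverse} to extend it from $\Spc S(\R^d)$ to $\Spc M(\R^d)$. First I would set $\Lop^{-1}_{\V \phi}\varphi = \int_{\R^d} g_{\V \phi}(\cdot,\V y)\varphi(\V y)\dint \V y$ with $g_{\V \phi}$ as in \eqref{Eq:kernelinverseg}, the guiding intuition being that $\rho_\Lop(\cdot-\V y)$ is a shift-invariant particular solution of $\Lop f=\delta(\cdot-\V y)$, while the correction $-\sum_n p_n q_n(\V y)$ subtracts precisely the null-space content needed to enforce the boundary functionals. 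The quantity $q_n(\V y)=\langle \phi_n,\rho_\Lop(\cdot-\V y)\rangle$ is well-defined as soon as each $\phi_n$ acts continuously on the shifted Green's functions; since $\rho_\Lop\in L_{\infty,n_0}(\R^d)$, this holds once $\phi_n$ is taken in the predual $L_{1,-n_0}(\R^d)$ (or, more generally, on any space containing both $\Spc N_\Lop$ and $\{\rho_\Lop(\cdot-\V y)\}_{\V y}$), which I would record as a standing hypothesis inherited from the biorthogonal system.

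Next I would verify the right-inverse property by applying $\Lop$ under the integral sign: because $\Lop$ is shift-invariant with $\Lop\rho_\Lop=\delta$ and $\Lop p_n=0$, one has $\Lop\{g_{\V \phi}(\cdot,\V y)\}=\delta(\cdot-\V y)$ for each fixed $\V y$, so that $\Lop\Lop^{-1}_{\V \phi}\varphi=\int_{\R^d}\delta(\cdot-\V y)\varphi(\V y)\dint\V y=\varphi$. For the boundary conditions I would compute $\langle \phi_m,g_{\V \phi}(\cdot,\V y)\rangle = q_m(\V y)-\sum_n \langle \phi_m,p_n\rangle q_n(\V y)$; invoking the biorthogonality $\langle \phi_m,p_n\rangle=\delta_{m,n}$ from Definition \ref{Def:basis}, this collapses to $q_m(\V y)-q_m(\V y)=0$ for every $\V y$, whence $\V \phi(\Lop^{-1}_{\V \phi}\varphi)=\V 0$ identically.

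Uniqueness follows from the structure of the two relations: if two operators both satisfy them, their difference $\Delta$ sends every $\varphi$ into $\Spc N_\Lop$ (since $\Lop\Delta\varphi=0$) while obeying $\V \phi(\Delta\varphi)=\V 0$; the biorthogonal expansion $q=\sum_n \langle \phi_n,q\rangle p_n$ valid on $\Spc N_\Lop$ then forces $\V \phi(q)=\V 0\Rightarrow q=0$, so $\Delta\equiv 0$. For the continuous extension I would appeal directly to Theorem \ref{Theo:stableinverse}: under the stability bound \eqref{Eq:Stable} with $\alpha_0=n_0$, the integral operator with kernel $g_{\V \phi}$ maps $\Spc M(\R^d)\to L_{\infty,n_0}(\R^d)$ boundedly, furnishing the extension as a well-defined bounded operator, and membership of the output in $\Spc M_\Lop(\R^d)$ then follows from $\|\Lop\Lop^{-1}_{\V \phi}\varphi\|_{\Spc M}=\|\varphi\|_{\Spc M}<\infty$.

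The main obstacle I anticipate is the rigorous justification that \eqref{Eq:rightinv} and \eqref{Eq:boundary} survive the passage from $\Spc S(\R^d)$ to the full, non-reflexive space $\Spc M(\R^d)$: one must legitimize the interchange of the distributional operator $\Lop$ (and of the functionals $\phi_m$) with integration against a general Borel measure rather than a smooth rapidly-decaying test function. I would handle this by a weak*-density argument, approximating an arbitrary $w\in\Spc M(\R^d)$ by a net of Schwartz functions, using the continuity from Theorem \ref{Theo:stableinverse} to pass to the limit in the $L_{\infty,n_0}$ pairing, and checking that $\Lop$ is weak*-to-weak* continuous on the relevant spaces so that $\Lop\Lop^{-1}_{\V \phi}w=w$ persists; the boundary identity extends more easily, since $\langle \phi_m,g_{\V \phi}(\cdot,\V y)\rangle$ already vanishes pointwise in $\V y$.
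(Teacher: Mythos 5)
Your proposal is correct and follows essentially the same route as the paper's own proof: verification of the right-inverse and boundary identities on $\Spc S(\R^d)$ using $\Lop\rho_\Lop=\delta$, $\Lop p_n=0$, and the biorthogonality $\langle\phi_m,p_n\rangle=\delta_{m,n}$; uniqueness by noting that the difference of two right-inverses lands in $\Spc N_\Lop$ and is annihilated by $\V \phi$, hence vanishes; and extension to $\Spc M(\R^d)$ via the stability bound of Theorem \ref{Theo:stableinverse} combined with a density/continuity argument. The only cosmetic difference is in this last step, where the paper invokes the Hahn--Banach theorem and the density of $\Spc S(\R^d)$ in $\Spc M(\R^d)$, while you phrase the same idea as a weak*-approximation of a general measure by Schwartz functions.
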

The proof of Theorem \ref{Theo:inverse} 
is given in Appendix  \ref{App:Inverse}.}

\review{
Since the choice of the $N_0$ linear boundary functionals $\phi_n$ is essentially arbitrary, there is flexibility in defining admissible inverse operators.
The important ingredient for our formulation is the existence of such inverses with the unconditional guarantee of their stability (see Theorem \ref{Theo:gBeppoLevi} below). 
}

\review{
To put this result into context, we now provide some illustrative examples.
For $\Lop=\Dop^{N_0}$, we have that $n_0=(N_0-1)$, $\rho_{\Dop^{N_0}}(x)=\frac{x_+^{n_0}}{n_0!}$, and $p_n(x)=\frac{x^{n-1}}{(n-1)!}$ for $n=1,\ldots,N_0$, where the polynomial basis is biorthogononal to $\V \phi$ with $\phi_n(x)=(-1)^{(n-1)}\delta^{(n-1)}(x)$. This (canonical) choice of boundary functionals then translates into the construction of an inverse operator
$\Lop^{-1}_{\V \phi}$
that imposes the vanishing of the function and its derivatives at the origin. By applying \eqref{Eq:kernelinverseg} and recognizing the binomial expansion of $(x-y)^{n_0}$, we simplify the expression of the kernel of this operator as
\revise{\begin{align*}
g_{\V \phi}(x,y)&=\frac{(x-y)_+^{n_0}}{n_0!}-\sum_{n=0}^{n_0}\frac{x^{n}}{n!}\frac{(-y)_+^{n_0-n}}{(n_0-n)!}\\
&=
\left\{
\begin{array}{rc}
\frac{(x-y)^{n_0}}{n_0!} \One_{(0,x]}(y),  &  x\ge 0 \\
 -\frac{(x-y)^{n_0}}{n_0!} \One_{(x,0]}(y),   &   x<0.
\end{array}
\right.
\end{align*}}
The crucial observation here is that the function $g_{\V \phi}(x,\cdot)$ with $x\in \R$ fixed is compactly supported and bounded. Moreover, $\|g_{\V \phi}(x,\cdot)\|_{\infty}=|g_{\V \phi}(x,0)|=\frac{x^{n_0}}{n_0!}$ so that $g_\V \phi$ obviously satisfies the stability bound \eqref{Eq:Stable} with $\alpha_0=n_0$. By contrast, the condition fails for the conventional shift-invariant inverse $\varphi \mapsto \rho_{\Dop^{N_0}} \ast \varphi$ ($n_0$-fold integrator), which stresses out the non-trivial
stabilizing effect of the second correction term in \eqref{Eq:kernelinverseg}. The other important consequence of the correction is the vanishing of $g_{\V \phi}(x,y)$ as $y \to \pm \infty$ for any fixed $x \in \R^d$, contrary to its leading term $(x-y)_+^{n_0}/n_0!$ which does not decay (and even grows) as $y\to-\infty$. 
}

\review{
The primary usage of the inverse operators of Theorem \ref{Theo:inverse} is the resolution of differential equations of the form
\begin{align}
\label{Eq:Ls}
\Lop s=w \quad \mbox{ s.t. } \quad \V \phi(s)=(b_1,\dots,b_{N_0})
\end{align}
for some $w \in \Spc M(\R^d)$.
Indeed, by invoking the properties of $\Op L_{\V \phi}^{-1}$ and the biorthogonality of $(\V \phi ,\V p)$, we readily show that \eqref{Eq:Ls} admits a unique solution in $\Spc M_\Lop(\R^d)$, which is given by
$$
s=\Lop_{\V \phi}^{-1}w + \sum_{n=1}^{N_0} b_n p_n.
$$
For the particular case of the spline innovation $w_\delta$ in Definition \ref{Def:spline}, we find that
$$
s=\sum_{k}a_k\Lop_{\V \phi}^{-1}\{\delta(\cdot-\bx_k)\}+ \sum_{n=1}^{N_0} b_n p_n
$$
which, upon substitution of the kernel given by \eqref{Eq:kernelinverseg}, results in a form that is the same as \eqref{eq:spline} in Theorem \ref{Theo:L1spline} modulo some adjustment of the constants $b_n$.}
\section{Native or Generalized Beppo-Levi Spaces}
\label{Sec:Space}
The search for the solution of our optimization problem is performed over the native space $\Spc M_\Lop(\R^d)$ defined by 
\eqref{Eq:native}, which is the largest space over which our gTG regularization functional is well defined.
\review{The delicate aspect is that $\Spc M_\Lop(\R^d)$
is specified in terms of a semi-norm, in analogy with  the definition of the classical Beppo-Levi spaces of order $n\in \N$ and exponent $p\ge1$, written as
$\Spc B_{p,n} (\R^d)=\{f\in \Spc S'(\R^d): \| \partial^{\V m} f \|_{L_p}<\infty \mbox{ for all multi-indices } |\V m|=n\}$ \cite{Deny1954,Kurokawa1988}. Hence, in 1D, the proposed definition of $\Spc M_{\Dop^n}(\R)$ is a slight extension of  $\Spc B_{1,n} (\R)$. In higher dimensions, it can be shown\footnote{The argument is that the only functions that are harmonic and of slow growth are polynomials.} that $\Spc B_{p,2n} (\R^d)=\{f\in L_{\infty,2n-1}(\R^d): \|(-\Delta)^n f\|_{L_p}<\infty\}$, 
so that there also exists a close connection between $\Spc B_{1,2n} (\R^d)$ and  $\Spc M_{(-\Delta)^n} (\R^d)$. }
 
The crucial point for our formulation is that 
 $\Spc M_\Lop(\R^d)$ also happens to be a complete normed (or Banach) space when equipped with the proper direct-sum topology. We shall now make this structure explicit with the help of the inverse operators defined in Theorem \ref{Theo:inverse}. 
Since the principle is similar
to the characterization of the Beppo-Levi spaces, we shall also refer to $\Spc M_\Lop(\R^d)$ as a generalized Beppo-Levi space.  \\[1ex] 
%

\setcounter{theorem}{4}
\begin{theorem}[Banach-space structure of native space] 
\label{Theo:gBeppoLevi}
\review{Let $\Lop$ be a spline-admis\-sible operator, $\Spc M_\Lop(\R^d)$ its native space defined by \eqref{Eq:native}, and $(\V \phi,\V p)$ some biorthogonal system for its null space
$\Spc N_\Lop$. 
Then, the following equivalent conditions hold:}
\begin{enumerate}
\item \review{The right-inverse operator
$\Lop_\V \phi^{-1}$ specified by Theorem \ref{Theo:inverse} isometrically maps $\Spc M(\R^d) \to \Spc M_\Lop(\R^d)\subset L_{\infty,n_0}(\R^d)$, while its kernel necessarily fullfills the stability condition 
\begin{align}
\label{Eq:Stabilitybound}
C_{\V \phi}=\sup_{\bx, \V y \in \R^d} \left(|g_{\V \phi}(\V x,\V y)|\;(1+\|\bx\|)^{-n_0}\right)<\infty.\end{align}}
\item Every $f \in \Spc M_\Lop(\R^d)$ admits a unique representation as
$$
f=\Lop_\V \phi^{-1}w +p,
$$
where $w=\Lop\{f\} \in \Spc M(\R^d)$ and $p=\sum_{n=1}^{N_0} \langle f, \phi_n \rangle p_n  \in \Spc N_{\Lop}$.
\item $\Spc M_\Lop(\R^d)$ is a Banach space equipped with the norm
\begin{align}
\label{Eq:norm}
\|f\|_{\Lop, \V \phi}=\|\Lop f\|_{\Spc M}+\|\V \phi(f)\|_2.
\end{align}
\end{enumerate}
\end{theorem}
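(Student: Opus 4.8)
The plan is to treat the stability bound $C_{\V \phi}<\infty$ of \eqref{Eq:Stabilitybound} as the single analytic fact on which all three statements hinge: once it is in hand, I would obtain the forward chain (1)$\Rightarrow$(2)$\Rightarrow$(3) by elementary manipulations, and recover the reverse implications from Theorem \ref{Theo:stableinverse}, which makes the bound equivalent to the continuity of $\Lop_\V \phi^{-1}\colon \Spc M(\R^d)\to L_{\infty,n_0}(\R^d)$. I would emphasize at the outset that the \emph{isometry} asserted in Condition 1 is essentially free: for any $w \in \Spc M(\R^d)$ the defining identities \eqref{Eq:rightinv} and \eqref{Eq:boundary} give $\Lop \Lop_\V \phi^{-1} w = w$ and $\V \phi(\Lop_\V \phi^{-1}w)=\V 0$, so that $\|\Lop_\V \phi^{-1} w\|_{\Lop,\V \phi}=\|\Lop \Lop_\V \phi^{-1} w\|_{\Spc M}+\|\V \phi(\Lop_\V \phi^{-1}w)\|_2=\|w\|_{\Spc M}$. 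The whole role of $C_{\V \phi}<\infty$, via Theorem \ref{Theo:stableinverse} with $\alpha_0=n_0$, is to guarantee that the range of this isometry actually lands in $L_{\infty,n_0}(\R^d)$, hence in $\Spc M_\Lop(\R^d)$, upgrading $\Lop_\V \phi^{-1}$ from an operator on $\Spc S(\R^d)$ to a bounded operator $\Spc M(\R^d)\to\Spc M_\Lop(\R^d)$.

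Granting Condition 1, I would derive the direct-sum decomposition of Condition 2 as follows. First I would fix $f \in \Spc M_\Lop(\R^d)$ and set $w=\Lop f \in \Spc M(\R^d)$, which is legitimate by the definition \eqref{Eq:native} of the native space. Then I would form $p:=f-\Lop_\V \phi^{-1}w$; applying $\Lop$ and using the right-inverse property gives $\Lop p = w-w=0$, while $p \in L_{\infty,n_0}(\R^d)$ since both $f$ and $\Lop_\V \phi^{-1}w$ lie there, so the growth-restricted null-space clause of Definition \ref{Def:splineadmis} forces $p \in \Spc N_\Lop$. Applying $\V \phi$ and using \eqref{Eq:boundary} yields $\V \phi(p)=\V \phi(f)$, whence the biorthogonality $\V \phi(p_n)=\M e_n$ identifies $p=\sum_{n=1}^{N_0}\langle \phi_n, f\rangle p_n$, exactly the claimed component. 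Uniqueness is immediate, since any second representation $f=\Lop_\V \phi^{-1}w'+p'$ gives $w'=\Lop f=w$ after applying $\Lop$, and then $p'=p$.

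The passage to Condition 3 is then formal. The decomposition says that $\Phi\colon f \mapsto (\Lop f, \V \phi(f))$ is a linear bijection of $\Spc M_\Lop(\R^d)$ onto $\Spc M(\R^d)\times \R^{N_0}$, and it is an isometry once the target carries the norm $\|(w,\M b)\|=\|w\|_{\Spc M}+\|\M b\|_2$, since this is precisely $\|f\|_{\Lop,\V \phi}$ transported through $\Phi$. I would first check that $\|\cdot\|_{\Lop,\V \phi}$ is a genuine norm and not merely the seminorm ${\rm gTV}$: if it vanishes, then $\|\Lop f\|_{\Spc M}=0$ puts $f \in \Spc N_\Lop$, and $\V \phi(f)=\V 0$ together with biorthogonality forces $f=0$. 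Since $\Spc M(\R^d)$ is a Banach space as the dual of $C_0(\R^d)$ and $\R^{N_0}$ is finite-dimensional, the product is complete, and an isometric isomorphism transports completeness back, so $\Spc M_\Lop(\R^d)$ is Banach. For the reverse implication (3)$\Rightarrow$(1) I would use that completeness together with the continuous inclusion $\Spc M_\Lop(\R^d)\hookrightarrow L_{\infty,n_0}(\R^d)$ gives $\|\Lop_\V \phi^{-1}w\|_{\infty,n_0}\le C\,\|\Lop_\V \phi^{-1}w\|_{\Lop,\V \phi}=C\,\|w\|_{\Spc M}$, which by Theorem \ref{Theo:stableinverse} is equivalent to $C_{\V \phi}<\infty$; the delicate point here, to be handled by a closed-graph argument comparing limits in $\Spc S'(\R^d)$, is justifying that the inclusion is indeed continuous, and I would flag that the genuine mathematical content lives in the forward direction.

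The hard part will be the stability estimate itself, namely showing that $C_{\V \phi}<\infty$ is not an extra hypothesis but a structural consequence of spline-admissibility. The obstacle is that the leading term $\rho_{\Lop}(\V x-\V y)$ of the kernel \eqref{Eq:kernelinverseg} grows polynomially in $\V y$ (of degree up to $n_0$) for fixed $\V x$, so it alone cannot satisfy \eqref{Eq:Stabilitybound}; the entire purpose of the correction $-\sum_{n=1}^{N_0} p_n(\V x) q_n(\V y)$, with $q_n(\V y)=\langle \phi_n,\rho_{\Lop}(\cdot-\V y) \rangle$, is to subtract off precisely this polynomial-in-$\V y$ part, leaving a remainder that decays as $\|\V y\|\to\infty$ and is bounded by a multiple of $(1+\|\V x\|)^{n_0}$. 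Making this cancellation quantitative in full generality, rather than in the transparent case $\Lop=\Dop^{N_0}$ where $g_{\V \phi}(\V x,\cdot)$ even becomes compactly supported, is where I would expect to spend the real effort, presumably by expanding $\rho_{\Lop}(\V x-\V y)$ against the biorthogonal system $(\V \phi,\V p)$ and estimating the resulting remainder uniformly in both $\V x$ and $\V y$.
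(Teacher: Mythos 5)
Your reduction (1)$\Rightarrow$(2)$\Rightarrow$(3) is sound and closely parallels Steps 2 and 3 of the paper's proof, but the proposal as a whole is circular: the theorem asserts that all three conditions \emph{hold}, not merely that they are pairwise equivalent, so at least one of them must be established unconditionally. In your plan every implication is conditional --- (2) and (3) are derived from (1), and (1) is recovered from (3) via a closed-graph argument --- while the single unconditional ingredient, the stability bound $C_{\V \phi}<\infty$ of \eqref{Eq:Stabilitybound}, is explicitly deferred as ``the hard part'' with only a heuristic plan attached. That bound is precisely the mathematical content of the theorem, and the route you sketch for it (expanding $\rho_\Lop(\V x-\V y)$ against the biorthogonal system and estimating the remainder uniformly in $\V x$ and $\V y$) is unlikely to go through in general: the $\phi_n$ are arbitrary elements of $\Spc N'_\Lop$, the functions $q_n(\V y)=\langle \phi_n,\rho_\Lop(\cdot-\V y)\rangle$ have no explicit form, and $\rho_\Lop(\cdot-\V y)$ does not lie in $\Spc N_\Lop$, so there is no expansion of it ``against $(\V \phi,\V p)$'' to exploit; outside transparent cases such as $\Lop=\Dop^{N_0}$ the cancellation cannot be made quantitative by direct estimation.

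The paper obtains Condition 1 by a soft argument that avoids any kernel estimate, and this is exactly the piece your proposal is missing. It first shows, unconditionally, that $\Spc M_{\Lop,\V \phi}(\R^d)=\{f\in \Spc M_\Lop(\R^d):\V \phi(f)=\M 0\}$ equipped with $\|\Lop\{\cdot\}\|_{\Spc M}$ is a Banach space on which $\Lop$ acts as a bijective, norm-preserving map onto $\Spc M(\R^d)$ (a concrete realization of the quotient $\Spc M_\Lop(\R^d)/\Spc N_\Lop$; biorthogonality of $(\V \phi,\V p)$ is what upgrades the gTV semi-norm to a true norm there). The bounded-inverse theorem then furnishes a bounded --- in fact isometric --- inverse $\Spc M(\R^d)\to \Spc M_{\Lop,\V \phi}(\R^d)$, which is identified with $\Lop^{-1}_{\V \phi}$ through the boundary conditions and the uniqueness statement of Theorem \ref{Theo:inverse}. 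Finally, the \emph{necessity} half of the proof of Theorem \ref{Theo:stableinverse} --- testing the bounded operator on $\delta(\cdot-\V y)$, which has unit $\Spc M$-norm --- converts this operator bound into the kernel bound \eqref{Eq:Stabilitybound}. In other words, the stability constant is extracted from completeness and bijectivity, not proven by hand. If you graft this argument in as your base case, your chain (1)$\Rightarrow$(2)$\Rightarrow$(3) completes the proof; without it, nothing in your proposal is ever established.
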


\begin{proof}
\review{
As preparation, we define a subset of $\Spc M_{\Lop}(\R^d)$ as
\label{App:gBeppo}
\begin{align}
\label{Eq:Mspace}
\Spc M_{\Lop,\V \phi}(\R^d)&=\{f \in \Spc M_\Lop(\R^d): \V \phi(f)=\M 0\}.
\end{align}
Since the boundary conditions $\V \phi(f)=\M 0$ are linear, $\Spc M_{\Lop,\V \phi}(\R^d)$ is clearly a vector space.
We now show that it is a Banach space when equipped with the norm $\|\cdot\|_\Lop=\|\Lop\{\cdot\}\|_{\Spc M}$. By definition, $\|\cdot\|_\Lop$ is a semi-norm on $\Spc M_\Lop(\R^d)$, meaning that it fulfills the properties of a norm, except for the unicity condition. To establish the latter on $\Spc M_{\Lop,\V \phi}(\R^d)$, we consider
$f \in \Spc M_{\Lop,\V \phi}(\R^d)$ such that $\|f\|_{\Lop}=0$, which is equivalent to $f \in \Spc N_\Lop$. Since $f\in \Spc M_{\Lop,\V \phi}(\R^d)$ (by hypothesis), we have that $\V \phi(f)=\M 0$, from which we deduce that $f=\sum_{n=1}^{N_0} \langle \phi_n,f\rangle p_n=0$, as expected. This proves that $\Spc M_{\Lop,\V \phi}(\R^d)$ is isometrically isomorphic to $\Spc M(\R^d)$ and, hence, a Banach space. 
Alternatively, one can also view $\Spc M_{\Lop,\V \phi}(\R^d)$ as a concrete transcription (or representative within the equivalence class) of the abstract quotient space $\Spc M_{\Lop}(\R^d)/\Spc N_\Lop$.
}

\vspace*{1ex}
\review{
{\em 1. Existence and Stability of Inverse Operators.} We have just revealed that $\Lop$ is a bijective, norm-preserving mapping $\Spc M_{\Lop,\V \phi}(\R^d) \to \Spc M(\R^d)$. This allows us to invoke the bounded-inverse theorem, which ensures the existence and boundedness (here, an isometry) of the inverse operator $\Lop^{-1}: \Spc M(\R^d) \to \Spc M_{\Lop,\V \phi}(\R^d)$. The relevant $\Lop^{-1}$ is precisely the unique operator $\Lop_{\V \phi}^{-1}$ identified in Theorem \ref{Theo:inverse}, as it imposes the boundary condition $\V \phi(\Lop_{\V \phi}^{-1}w)=\M 0$ for all $w\in \Spc M(\R^d)$. Finally, we use the same technique as in the proof of Theorem \ref{Theo:stableinverse} to establish the necessity of the stability condition \eqref{Eq:Stable} with $\alpha_0=n_0$.
%
}

\vspace*{0.5ex}{\em 2. Direct Sum Decomposition}. Since the system $(\V \phi, \V p)$ is biorthogonal,
the operator ${\rm Proj}_{\Spc N_\Lop}: f \mapsto \sum_{n=1}^{N_0}
\langle \phi_n, f \rangle p_n$ is a continuous projection operator $\Spc M_{\Lop}(\R^d) \to \Spc N_\Lop(\R^d)$. It follows that any element $f\in \Spc M_{\Lop}(\R^d)$
has a unique decomposition as
$
f=f_1 + q
$,
where $q={\rm Proj}_{\Spc N_\Lop}\{f\} \in \Spc N_\Lop$ and $f_1=(f-q)$ with $\V \phi(f_1)=\M 0$. This last condition implies that 
$f_1 \in \Spc M_{\Lop,\V \phi}(\R^d)$ so that $f_1$ has a unique representation as
$f_1=\Lop_{\V \phi}^{-1}w$, where $w=\Lop f_1=\Lop f \in \Spc M(\R^d)$.
Since $\Spc M_{\Lop,\V \phi} \cap \Spc N_\Lop=\{0\}$, this expresses the structural property that $\Spc M_\Lop(\R^d)=\Spc M_{\Lop,\V \phi}(\R^d) \oplus \Spc N_\Lop$.

\vspace*{0.5ex}{\em 3. Identification of the Underlying Norm}. Any element $p \in  \Spc N_\Lop$ is uniquely characterized by its expansion coefficients $\V \phi(p)$ in the basis $\V p$.
The same holds true for $q={\rm Proj}_{\Spc N_\Lop}\{f\} \in  \Spc N_\Lop$ with $\V \phi(q)=\V \phi(f)$ for any $f\in \Spc M_\Lop(\R^d)$.
Since $\Spc M_{\Lop,\V \phi}(\R^d)$ and $\Spc N_\Lop$ are both Banach spaces,  we can equip their direct sum $\Spc M_\Lop(\R^d)$ with the composite norm $\|f\|_{\Lop, \V \phi}=\|w\|_{\Spc M}+\|\V \phi(f)\|_{2}$, with the guarantee that the Banach-space property is preserved.

For the converse implication, we simply identify $\Spc M_{\Lop,\V \phi}(\R^d)$ as the closed subspace of $\Spc M_{\Lop}(\R^d)$ with the property that
$\|f\|_{\Lop, \V \phi}=\|\Lop f\|_{\Spc M}$.
\end{proof}

The connection with the $\Lop$-spline $s$ of Definition \ref{Def:spline}
is that $s \in \Spc M_{\Lop}(\R^d)$ if and only if the $\ell_1$-norm of its spline weights $\M a=(a_1,\ldots,a_K)$ is finite. Indeed, we have that $\|\Lop s\|_{\Spc M}=\|w_\delta\|_{\Spc M}=\sum_{k=1}^K |a_k|=\|\M a\|_{\ell_1}$, owing to the property that $\|\delta(\cdot-\bx_k)\|_{\Spc M}=1$. 

We note that the choice of gTV is essential here since the simpler (and {\em a priori} only slightly more restrictive) $L_1$-norm regularization $\|\Lop s\|_{L_1}$  would exclude the spline solutions that are of interest to us because $\delta \notin L_1(\R^d)$.
\vspace*{1ex}

\review{
Our final ingredient is the identification of the predual space of $\Spc M_\Lop(\R^d)$, which is denoted by $C_{\Lop}(\R^d)$. 
\begin{theorem}[Predual of native space] 
\label{Theo:Predual}
Let $(\V \phi, \V p)$ be a biorthogonal system of $\Spc N_\Lop\subset L_{\infty,n_0}(\R^d)$ and 
$C_{\Lop, \V p}(\R^d)$ be the image of $C_0(\R^d)$ by $\Lop^\ast: C_0(\R^d) \to C_{\Lop, \V p}(\R^d)$.
Then, $\Spc M_\Lop=\big(C_\Lop(\R^d)\big)'$ where $C_\Lop(\R^d)=C_{\Lop, \V p}(\R^d) \oplus \Spc N'_\Lop$
with $\Spc N'_\Lop={\rm span}\{\phi_n\}_{n=1}^{N_0}$. $C_\Lop(\R^d)$ is a Banach space equipped with the norm
\begin{align}
\label{Eq:Dualnorm}
\|f\|'_{\Lop,\V \phi}=\|\Lop_{\V \phi}^{-1\ast}f\|_\infty+\|\V p(f)\|_2.
\end{align}
\revise{where $\Lop_{\V \phi}^{-1\ast}=\big(\Lop_{\V \phi}^{-1}\big)^\ast$ is the adjoint of $\Lop_{\V \phi}^{-1}$.}
Moreover, there exists a constant $C>0$ such that
\begin{align}
\label{EQ:NormInecL1}
\|f\|'_{\Lop,\V \phi}\le C\|f\|_{L_{1,-n_0}}
\end{align}
for any $f\in L_{1,-n_0}(\R^d)$. 
\end{theorem}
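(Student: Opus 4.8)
The plan is to build the predual summand by summand, exploiting the direct-sum decomposition $\Spc M_\Lop(\R^d)=\Spc M_{\Lop,\V \phi}(\R^d)\oplus\Spc N_\Lop$ furnished by Theorem \ref{Theo:gBeppoLevi}. The finite-dimensional summand $\Spc N_\Lop$ is reflexive, so its predual is simply $\Spc N'_\Lop=\mathrm{span}\{\phi_n\}$, paired with $\Spc N_\Lop$ through the biorthogonality $\langle\phi_n,p_m\rangle=\delta_{nm}$. The whole difficulty is therefore concentrated in the infinite-dimensional summand $\Spc M_{\Lop,\V \phi}$. By Theorem \ref{Theo:gBeppoLevi} the operator $\Lop$ is an isometric isomorphism $\Spc M_{\Lop,\V \phi}\to\Spc M(\R^d)$, and by Riesz--Markov $\Spc M(\R^d)=\big(C_0(\R^d)\big)'$; hence $C_0(\R^d)$ is a predual of $\Spc M_{\Lop,\V \phi}$ once it is transported through $\Lop$. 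The concrete realization of this transported predual as a space of distributions is exactly $C_{\Lop,\V p}=\Lop^\ast\big(C_0(\R^d)\big)$, carrying the pairing $\langle \Lop^\ast\varphi,f_1\rangle=\langle\varphi,\Lop f_1\rangle$ inherited from the original duality.

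Next I would establish the two adjoint identities that organize everything. Working first on $\Spc S(\R^d)$, where $\Lop$, $\Lop^\ast$, and $\Lop_{\V \phi}^{-1}$ are all unambiguous, and then extending by density, I would prove that $\Lop_{\V \phi}^{-1\ast}\Lop^\ast=\mathrm{Id}$ on $C_0(\R^d)$ (the pre-adjoint of the right-inverse identity $\Lop\Lop_{\V \phi}^{-1}=\mathrm{Id}$) and that $\Lop_{\V \phi}^{-1\ast}\phi_n=0$ for every $n$ (the pre-adjoint of the boundary conditions \eqref{Eq:boundary}). The first identity exhibits $\Lop_{\V \phi}^{-1\ast}$ as a left inverse of $\Lop^\ast$, so $\Lop^\ast$ is injective on $C_0(\R^d)$ and $\|\Lop^\ast\varphi\|'_{\Lop,\V \phi}=\|\Lop_{\V \phi}^{-1\ast}\Lop^\ast\varphi\|_\infty=\|\varphi\|_\infty$; thus $\Lop^\ast$ is an isometry onto the complete space $C_{\Lop,\V p}$. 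The two identities also give $\Lop_{\V \phi}^{-1\ast}$ the value $\varphi$ on the first summand and $0$ on $\Spc N'_\Lop$, while $\V p$ returns the null-space coefficients and annihilates $C_{\Lop,\V p}$ (since $\langle p_m,\Lop^\ast\varphi\rangle=\langle\Lop p_m,\varphi\rangle=0$). Hence $C_{\Lop,\V p}\cap\Spc N'_\Lop=\{0\}$, the sum $C_\Lop=C_{\Lop,\V p}\oplus\Spc N'_\Lop$ is a genuine Banach direct sum, and the norm \eqref{Eq:Dualnorm} splits cleanly across the two summands.

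With the predual in place, I would identify $\Spc M_\Lop=\big(C_\Lop\big)'$ through the canonical map $J:f\mapsto\langle\cdot,f\rangle$. Injectivity is immediate: $J(f)=0$ forces $\langle\varphi,\Lop f\rangle=0$ for all $\varphi\in C_0(\R^d)$, hence $\Lop f=0$ and $f\in\Spc N_\Lop$, while $\langle\phi_n,f\rangle=0$ together with biorthogonality gives $f=0$. For surjectivity I would take $\ell\in(C_\Lop)'$, restrict it to each summand, apply Riesz--Markov to $\ell\circ\Lop^\ast\in(C_0)'$ to obtain a measure $w\in\Spc M(\R^d)$, read off the null-space vector from the values $\ell(\phi_n)$, and reassemble $f=\Lop_{\V \phi}^{-1}w+\sum_n\ell(\phi_n)p_n$. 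The two adjoint identities make the cross terms $\langle\phi_n,\Lop_{\V \phi}^{-1}w\rangle=0$ and $\langle\Lop^\ast\varphi,p_n\rangle=0$ vanish, so $J(f)=\ell$ on all of $C_\Lop$. A direct computation of the dual norm then yields $\max\big(\|\Lop f\|_{\Spc M},\|\V \phi(f)\|_2\big)$, which is equivalent to the composite norm \eqref{Eq:norm} of Theorem \ref{Theo:gBeppoLevi}, so the two Banach spaces coincide.

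Finally, the embedding inequality \eqref{EQ:NormInecL1} is a pair of elementary estimates once the norm is split. Using the adjoint kernel $g_{\V \phi}(\V x,\cdot)$ of $\Lop_{\V \phi}^{-1}$ and the stability bound \eqref{Eq:Stabilitybound}, one has $|\Lop_{\V \phi}^{-1\ast}f(\V y)|\le\int_{\R^d}|g_{\V \phi}(\V x,\V y)|\,|f(\V x)|\dint\V x\le C_{\V \phi}\int_{\R^d}(1+\|\V x\|)^{n_0}|f(\V x)|\dint\V x=C_{\V \phi}\|f\|_{L_{1,-n_0}}$, which controls the first term of \eqref{Eq:Dualnorm}; the membership $\Lop_{\V \phi}^{-1\ast}f\in C_0(\R^d)$ follows from the continuity and decay of $g_{\V \phi}$ in its second argument that can be read off from \eqref{Eq:kernelinverseg}. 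The second term obeys $\|\V p(f)\|_2\le\big(\sum_n\|p_n\|_{\infty,n_0}^2\big)^{1/2}\|f\|_{L_{1,-n_0}}$ because each $p_n\in\Spc N_\Lop\subset L_{\infty,n_0}(\R^d)$. Adding the two bounds gives the claim with $C=C_{\V \phi}+\big(\sum_n\|p_n\|_{\infty,n_0}^2\big)^{1/2}$. I expect the main obstacle to be the rigorous justification of the adjoint identities across these non-reflexive spaces --- concretely, realizing the transported predual as $\Lop^\ast(C_0)$ and proving surjectivity onto $(C_\Lop)'$ --- whereas the finite-dimensional null-space bookkeeping and the final estimate are routine.
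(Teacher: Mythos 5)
Your proposal is correct and takes essentially the same route as the paper's own proof: both realize the predual of $\Spc M_{\Lop,\V \phi}(\R^d)$ as $C_{\Lop,\V p}(\R^d)=\Lop^\ast\big(C_0(\R^d)\big)$ with $\Lop_{\V \phi}^{-1\ast}$ as the inverse isometry, attach the finite-dimensional summand $\Spc N'_\Lop$ through biorthogonality, transport the duality $\big(C_0(\R^d)\big)'=\Spc M(\R^d)$ across these isomorphisms to get $\big(C_\Lop(\R^d)\big)'=\Spc M_{\Lop,\V\phi}(\R^d)\oplus\Spc N_\Lop=\Spc M_\Lop(\R^d)$, and establish \eqref{EQ:NormInecL1} by the identical H\"older/stability estimates. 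The only (harmless) deviations are in bookkeeping: you prove injectivity of $\Lop^\ast$ via the left-inverse identity $\Lop_{\V\phi}^{-1\ast}\Lop^\ast=\mathrm{Id}$ rather than the paper's observation that $\Spc N_{\Lop^\ast}\cap C_0(\R^d)=\{0\}$, and you check surjectivity of the duality map by an explicit Riesz--Markov construction where the paper invokes the abstract dual-of-an-isomorphism argument.
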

The proof is given in Appendix \ref{App:Predual}.
The direct-sum decomposition in Theorem \ref{Theo:Predual} is achieved by means of the operator ${\rm Proj}_{\Spc N'_\Lop}:  f \mapsto q=\sum_{n=1}^{N_0}
\langle p_n, f \rangle \phi_n$ with $\|q\|=\|\V p(f)\|_2=\|\V p(q)\|_2$, which relies on the biorthogonality of $(\V \phi, \V p)$ to project $\Spc C_{\Lop}(\R^d)$ onto $\Spc N'_\Lop(\R^d)$.
This also means that $C_{\Lop, \V p}(\R^d)$ can be defined as $C_{\Lop, \V p}(\R^d)=\{f \in C_{\Lop}(\R^d): \V p(f)=\V 0 \}$, in direct analogy with the definition of $\Spc M_{\Lop, \V \phi}(\R^d)$ in \eqref{Eq:Mspace}.
}

\section{Proof of Theorems \ref{Theo:L1spline} and \ref{Theo:L1spline2}} 
\label{Sec:ProofgTVTheorems}
\review{
Our technique of proof
will be to first establish the optimality of innovation-type solutions of the form that appear in Definition \ref{Def:spline} for  general linear inverse problems defined on  $\Spc M(\R^d)$ (Theorem \ref{Theo:gFisher}) and to then transfer the result to $\Spc M_\Lop(\R^d)$ with the help of the stable inverse operators specified in Theorem 4.
The first step is achieved by generalizing an earlier result by Fisher and Jerome \cite{Fisher1975}.
}

Let $\Spc H$ be the direct sum of $\Spc M(\R^d)=\big(C_0(\R^d)\big)'$ and a finite-dimensional space $\Spc N$ 
equipped with some norm $\|\cdot\|_{\Spc N}$. The generic element of $\Spc H$ is $f=(w,p)$ with $\|f\|_{\Spc H}=\|w\|_{\Spc M}+\|p\|_{\Spc N}$. 

\setcounter{theorem}{6}
\begin{theorem} [Generalized Fisher-Jerome theorem]
\label{Theo:gFisher}
Let $\V F: \Spc H\to \R^M$ with $M\ge N_0=\dim(\Spc N)$ be a weak*-continuous linear map such that
\begin{align} 
B \|p\|_{\Spc N}  \le&\|\V F(0,p)\|_2 \label{eq:frame}
\end{align}
for some constant $B>0$ and every $p\in \Spc N$. 
Let $\Spc C$ be a convex compact subset of $\R^M$ such that
$\Spc U=\V F^{-1}(\Spc C)=\{(w,p)\in \Spc H: \V F(w,p)\in \Spc C\}$ is nonempty (feasibility hypothesis).
Then,
$$\Spc V=\arg \min_{(w,p) \in \Spc U} \|w\|_{\Spc M} $$
is a nonempty, convex, weak*-compact subset of $\Spc H$ with extremal points $(w_\delta,p)$ of the form
\begin{align}
\label{eq:extreme_w}
w_\delta=\sum_{k=1}^K a_k \delta(\cdot-\bx_k)
\end{align}
with $K\le M$ and $\bx_k\in \R^d$ for $k=1,\ldots,K$, and
$\min_{(w,p)\in \Spc U}\|w\|_{\Spc M}=\sum_{k=1}^K |a_k|$.\\
\end{theorem}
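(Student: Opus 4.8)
The plan is to separate the topological part---showing that $\Spc V$ is a nonempty, convex, weak*-compact set, so that it possesses extreme points---from the structural part, namely that each such extreme point carries a purely atomic measure with at most $M$ spikes. For the topological part I would run the direct method. Take a minimizing sequence $(w_n,p_n)\in\Spc U$ with $\|w_n\|_{\Spc M}\to\beta$. The norms $\|w_n\|_{\Spc M}$ are bounded, and since $\Spc C$ is compact (hence bounded) and each scalar component of the weak*-continuous map $\V F(\cdot,0)$ on $\Spc M(\R^d)=\big(C_0(\R^d)\big)'$ is integration against a predual element of $C_0(\R^d)$ (so that $\|\V F(w,0)\|_2\le C\|w\|_{\Spc M}$), the identity $\V F(0,p_n)=\V F(w_n,p_n)-\V F(w_n,0)$ together with the frame inequality \eqref{eq:frame} bounds $\|p_n\|_{\Spc N}$. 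Both components therefore live in weak*-compact sets: Banach--Alaoglu for the ball of $\Spc M(\R^d)$, whose separable predual $C_0(\R^d)$ makes bounded sets weak*-sequentially compact and metrizable, and ordinary compactness for the finite-dimensional $\Spc N$. Extracting a weak*-convergent subsequence $(w_n,p_n)\to(w^\ast,p^\ast)$, weak*-continuity of $\V F$ and closedness of $\Spc C$ give $(w^\ast,p^\ast)\in\Spc U$, while weak*-lower-semicontinuity of $\|\cdot\|_{\Spc M}$ gives $\|w^\ast\|_{\Spc M}\le\beta$, hence $=\beta$. This proves $\Spc V\neq\emptyset$. Convexity of $\Spc V$ follows from linearity of $\V F$, convexity of $\Spc C$, and convexity of the norm; weak*-compactness follows because $\Spc V=\Spc U\cap\{\|w\|_{\Spc M}\le\beta\}$ is a weak*-closed subset of a weak*-compact sublevel set.

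Having this, Krein--Milman guarantees that the weak*-compact convex set $\Spc V$ has extreme points, and I would then characterize them. Fix an extreme point $(w^\ast,p^\ast)$ and write $w^\ast=\sigma\,|w^\ast|$ for its polar decomposition, $|\sigma|=1$ a.e. The key construction is the multiplicative perturbation $w^\ast\mapsto(1+t\rho)\,w^\ast$ indexed by bounded real $\rho\in L^\infty(|w^\ast|)$ lying in the kernel $\Spc K=\{\rho:\V F(\rho\,w^\ast,0)=0\}$. For such $\rho$ and $|t|<1/\|\rho\|_\infty$ one has $1+t\rho>0$, so the perturbed measure keeps the constraint value $\V F(w^\ast,p^\ast)$ unchanged, hence stays feasible, and its total variation is exactly $\int(1+t\rho)\,\dint|w^\ast|=\beta+t\int\rho\,\dint|w^\ast|$. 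Minimality of $\beta$, applied to both signs of $t$, forces $\int\rho\,\dint|w^\ast|=0$; the two perturbed measures then both attain the value $\beta$, so both belong to $\Spc V$, and their average is $w^\ast$. Extremality therefore forces $\rho\,w^\ast=0$, i.e. $\Spc K=\{0\}$.

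The conclusion is a dimension count. The map $\rho\mapsto\V F(\rho\,w^\ast,0)$ is linear from $L^\infty(|w^\ast|)$ into $\R^M$, and $\Spc K=\{0\}$ says it is injective; hence $\dim L^\infty(|w^\ast|)\le M$. This is possible only if $|w^\ast|$ is purely atomic with at most $M$ atoms, since any diffuse part, or more than $M$ atoms, would make $L^\infty(|w^\ast|)$ more than $M$-dimensional (apply rank--nullity to the indicators of distinct atoms). Thus $w^\ast=\sum_{k=1}^K a_k\delta(\cdot-\bx_k)$ with $K\le M$, which is exactly \eqref{eq:extreme_w}, and $\|w^\ast\|_{\Spc M}=\sum_{k=1}^K|a_k|=\beta$.

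I expect the structural step---the perturbation argument together with the dimension count---to be the main obstacle, and within it the delicate points are: verifying that $\V F(\cdot,0)$ is genuinely norm-bounded so that the coercivity estimate closes; choosing the multiplicative perturbation $(1+t\rho)\,w^\ast$ precisely so that the total variation varies affinely in $t$, which is what makes minimality enforce $\int\rho\,\dint|w^\ast|=0$ automatically; and correctly converting injectivity of $\rho\mapsto\V F(\rho\,w^\ast,0)$ into the bound on the number of atoms. The topological preliminaries are comparatively routine once separability of $C_0(\R^d)$ and the weak*-lower-semicontinuity of the total-variation norm are invoked.
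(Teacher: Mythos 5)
Your proposal is correct, and its first half (nonemptiness, convexity, weak*-compactness via Banach--Alaoglu with the separable predual $C_0(\R^d)$, weak*-closedness of $\Spc U$, lower semicontinuity of the norm, then Krein--Milman) is exactly the paper's Part I. Where you diverge is the structural part. The paper argues by contradiction: if an extreme point $(w,p)$ charged $M+1$ disjoint Borel sets $E_m$, it forms the restrictions $w_m=w\One_{E_m}$, uses the linear dependence of the $M+1$ vectors $\V F(w_m,p)\in\R^M$ to get coefficients $c_m$, and perturbs $w\pm\epsilon\mu$ with $\mu=\sum_m c_m w_m$; disjointness of supports and positivity of $1\pm\epsilon c_m$ make the total variation affine in $\epsilon$, minimality kills the linear term, and the midpoint representation contradicts extremality. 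Your multiplicative perturbation $(1+t\rho)w^\ast$ is the same mechanism---the paper's $\mu$ is exactly $\rho w$ for the simple function $\rho=\sum_m c_m\One_{E_m}$---but you run it for all real $\rho\in L^\infty(|w^\ast|)$ at once, concluding that the kernel of $\rho\mapsto \V F(\rho w^\ast,0)$ is trivial and finishing with a dimension count. The trade-off: the paper only needs the pigeonhole fact that $M+1$ vectors in $\R^M$ are dependent, but then merely \emph{asserts} that a measure charging no $M+1$ disjoint sets is a sum of at most $M$ Diracs; your dimension count on $L^\infty(|w^\ast|)$ (splitting any diffuse part, counting atoms) makes that final conversion explicit, so it rigorously fills the one step the paper glosses over, at the cost of invoking slightly more measure-theoretic machinery (polar decomposition, the structure of $L^\infty$ of a finite measure). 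Both arguments rest on the same two facts: total variation adds over disjoint supports, and feasibility is preserved by perturbations annihilated by $\V F(\cdot,0)$.
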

Theorem \ref{Theo:gFisher} is the most technical component of our formulation as it involves the weak* topology. The details of the proof are laid out in Appendix \ref{App:gFisher} together with a precise definition of the underlying concepts. 

\review{
The essence of Theorem \ref{Theo:gFisher} is very similar to Fisher-Jerome's original result \cite[Theorem 1]{Fisher1975}, except for two crucial points:
(i) the fact that they are only considering measures defined over a bounded domain $\Omega \subset \R^d$ (or, by extension, on a compact metric space),
and (ii) the nature of the constraints which, in their case, is limited to coordinatewise inequalities of the form
$z_{1,m} \le [\V F(w,p)]_m \le z_{2,m}$. These differences are substantial enough to justify a new, self-contained proof.
In particular, we believe that our extension for functions defined on $\R^d$ (beyond the compact Hausdorff framework of \cite{Fisher1975}) is essential for covering nonlocal operators such as fractional derivatives, and for deploying Fourier-domain/signal-processing techniques.

}

\review{Our primary constraint for the validity of Theorem \ref{Theo:gFisher} is the existence of the lower bound \eqref{eq:frame}. We now show that this property is implicit in the statement of the hypotheses
of Theorem \ref{Theo:L1spline}.}

\review{\begin{proposition}
\label{Prop:lowerbound}
Let $(\phi_n,p_n)_{n=1}^{N_0}$ be a biorthogonal system of $\Spc N_\Lop \subset \Spc M_\Lop(\R^d)$ such that
$
q=\sum_{n=1}^{N_0} \langle \phi_n, q\rangle p_n \mbox{ for all } q \in \Spc N_\Lop.
$
 Then, Condition 3 in Theorem \ref{Theo:L1spline} is equivalent to the existence of a constant $0<B$ 
 such that
\begin{align}
B \|q\|_{\Spc N_\Lop}  \le\|\V \nu(q)\|_2, 
\quad  \forall q \in \Spc N_\Lop
\end{align}
with $\|q\|^2_{\Spc N_\Lop}=\|\V \phi(q)\|_2^2=\sum_{n=1}^{N_0}  |\langle \phi_n, q\rangle|^2$.
\end{proposition}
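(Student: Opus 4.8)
The plan is to recognize that the claim is the standard finite-dimensional fact that a linear map out of a finite-dimensional normed space is injective if and only if it is bounded below. Concretely, $\V \nu$ restricted to $\Spc N_\Lop$ is a linear map from the finite-dimensional normed space $(\Spc N_\Lop,\|\cdot\|_{\Spc N_\Lop})$ into $(\R^M,\|\cdot\|_2)$, and Condition 3 is exactly the statement that this restriction is injective (by linearity, $\V \nu(q_1)=\V \nu(q_2)\Leftrightarrow \V \nu(q_1-q_2)=\V 0$, so injectivity is equivalent to $\V \nu(q)=\V 0\Rightarrow q=0$). I would establish the two implications separately.

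For the direction ``lower bound $\Rightarrow$ Condition 3,'' I would argue directly, with no topology. Assuming $B\|q\|_{\Spc N_\Lop}\le \|\V \nu(q)\|_2$ holds for all $q$, suppose $\V \nu(q_1)=\V \nu(q_2)$. Then $\V \nu(q_1-q_2)=\V 0$ forces $\|q_1-q_2\|_{\Spc N_\Lop}=0$. Here I must use that $\|\cdot\|_{\Spc N_\Lop}=\|\V \phi(\cdot)\|_2$ is a genuine norm on $\Spc N_\Lop$, not merely a seminorm: if $\V \phi(q)=\V 0$, the biorthogonal reconstruction hypothesis $q=\sum_{n=1}^{N_0}\langle \phi_n,q\rangle p_n$ collapses to $q=0$. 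Hence $q_1=q_2$.

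For the converse ``Condition 3 $\Rightarrow$ lower bound,'' I would exploit the finite dimensionality $N_0=\dim\Spc N_\Lop<\infty$ guaranteed by spline-admissibility (Definition \ref{Def:splineadmis}). I would restrict to the unit sphere $S=\{q\in \Spc N_\Lop:\|q\|_{\Spc N_\Lop}=1\}$, which is compact since all norms on the finite-dimensional space $\Spc N_\Lop$ are equivalent. The function $q\mapsto \|\V \nu(q)\|_2$ is continuous, being the composition of the linear map $\V \nu$ with the Euclidean norm, so it attains its infimum $B$ on $S$ at some $q_0\in S$. Injectivity (Condition 3) guarantees $\V \nu(q_0)\ne \V 0$, because $q_0\ne 0$; hence $B=\|\V \nu(q_0)\|_2>0$. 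Homogeneity then upgrades this to the claimed inequality for all $q$: for $q\ne 0$ apply the sphere bound to $q/\|q\|_{\Spc N_\Lop}\in S$, and the case $q=0$ is trivial.

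The argument is essentially routine finite-dimensional functional analysis, so I do not expect a serious obstacle. The only two points that deserve care are verifying that $\|\cdot\|_{\Spc N_\Lop}$ is genuinely nondegenerate---supplied precisely by the reconstruction identity $q=\sum_{n=1}^{N_0}\langle \phi_n,q\rangle p_n$---and justifying the invocation of compactness of $S$, which rests on $\dim\Spc N_\Lop<\infty$. Once these are in place, the equivalence follows immediately.
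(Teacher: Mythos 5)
Your proof is correct, but it takes a genuinely different route from the paper's. Both directions of your argument hold: the nondegeneracy of $\|\cdot\|_{\Spc N_\Lop}$ does follow from the reconstruction identity $q=\sum_{n=1}^{N_0}\langle\phi_n,q\rangle p_n$, and your converse direction is the classical compactness argument---the unit sphere of the finite-dimensional space $\Spc N_\Lop$ is compact, the map $q\mapsto\|\V\nu(q)\|_2$ is continuous (automatic for linear maps on finite-dimensional spaces, so you do not even need the weak*-continuity hypothesis here), and injectivity forces its minimum $B$ on the sphere to be strictly positive. The paper, by contrast, deliberately avoids this soft argument: it expands $q$ in coordinates $\M c=\V\phi(q)$, forms the $M\times N_0$ matrix $\M P=[\V\nu(p_1)\,\cdots\,\V\nu(p_{N_0})]$, observes that Condition 3 amounts to solvability of the overdetermined system $\M P\M c=\M b$ (equivalently, invertibility of $\M P^T\M P$), and reads off the explicit constant $B=\sigma_{\min}^2(\M P)/\sigma_{\max}(\M P)$ from the least-squares solution $\M c=(\M P^T\M P)^{-1}\M P^T\M b$. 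Indeed the paper explicitly acknowledges that ``there are softer ways of establishing this equivalence''---yours is one---and explains its choice: the explicit matrix formulation is reused in the proof of Theorem \ref{Theo:L1spline2}, where the rank condition on $\M P$ is exploited to extract an invertible submatrix $\M P_0$ and build the biorthogonal system $\V\phi_0=\M P_0^{-1}\V\nu_0$. So what your approach buys is brevity and independence from any continuity hypothesis on $\V\nu$; what you give up is the quantitative constant and the coordinate machinery that the paper leverages downstream.
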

}

\review{
While there are softer ways of establishing this equivalence, we have chosen an explicit approach that also serves as background for the proof of Theorem \ref{Theo:L1spline2}.
\begin{proof} Any $q \in\Spc N_\Lop$ has a unique expansion $q=\sum_{n=1}^{N_0}c_n p_n$ with $\M c=\V \phi(q)$ and $\|q\|_{\Spc N_\Lop}=\|\M c\|_2$. The property that $q$ is uniquely determined by its measurements $\M b=\V \nu(q)$ is therefore equivalent to $\M c$ also being the solution of the overdetermined system 
$
\revise{\M P} \M c = \M b
$
with 
\begin{align}
\revise{\M P}=[\V \nu(p_1) \ \cdots \ \V \nu(p_{N_0})].
\end{align}
It is well known that such a system is solvable if and only if $(\revise{\M P}^T \revise{\M P})$ is invertible and that its (least-squares) solution is given by
\begin{align*}
\label{Eq:LSc}
\M c=(\revise{\M P}^T\revise{\M P})^{-1}\revise{\M P}^T\M b.
\end{align*}
This characterization then yields the norm estimate
$$
\|q\|_{\Spc N_\Lop}=\|\M c\|_2 \le \frac{\sigma_{\max}(\revise{\M P})}{\sigma^2_{\min}(\revise{\M P})} \|\V \nu(q)\|_2,
$$
where $\sigma_{\min}(\revise{\M P})=\sigma_{\min}(\revise{\M P}^T)$ and $\sigma_{\max}(\revise{\M P})$ are the minimum and maximum singular values of $\revise{\M P}$, respectively. Finally, the invertibility of $(\revise{\M P}^T\revise{\M P})$ is equivalent to $\sigma^2_{\min}(\revise{\M P})=\revise{\lambda_{\min}(\revise{\M P}^T\revise{\M P})}>0$, while the continuity assumption on $\V \nu$ ensures that 
$\sigma_{\max}(\revise{\M P})<\infty$. \revise{The constant is then given by $B=\sigma^2_{\min}(\revise{\M P})/\sigma_{\max}(\revise{\M P})$}.
\end{proof}
}

\begin{proof}[Proof of Theorem \ref{Theo:L1spline}]\ 
Let $(\V \phi,\V p)$ be a biorthogonal system for $\Spc N_\Lop$. 
Then, by Theorem \ref{Theo:gBeppoLevi}, any function $f \in \Spc M_{\Lop}(\R^d)$ has a unique decomposition as
$f=\Lop_{\V \phi}^{-1}w + p$ with $w=\Lop f \in \Spc M(\R^d)$ and $p \in  \Spc N_{\Lop}$.
This allows us to interpret the measurement process $f \mapsto \V \nu(f)=\langle \V \nu, f\rangle$ as the linear map  $\V F: \Spc H \to \R^M$ such that
\begin{align*}
\langle \V \nu, f\rangle&=\langle \V \nu, \Lop_{\V \phi}^{-1}w\rangle+\langle \V \nu, p\rangle \\
&=\langle \Lop_{\V \phi}^{-1\ast}\V \nu, w\rangle+\langle \V \nu, p\rangle= \V F(w,p).
\end{align*}
\review{We also know from Theorem \ref{Theo:Predual} that $\Lop^{-1\ast}_{\VÊ\phi}$ is an isometry $C_\Lop(\R^d) \to C_0(\R^d)$. 
Hence,
the weak*-continuity of $\V \nu: \Spc M_\Lop(\R^d)\to\R^M$ is equivalent to the weak*-continuity
of $\V F: \Spc H \to \R^M$.
The complementary lower bound is given by Proposition \ref{Prop:lowerbound} as
\begin{align*}
B \|p\|_{\Spc M_{\Lop,\V \phi}}  \le&\|\V \nu(p)\|_2=\|\V F(0,p)\|_2.
\end{align*}
With this new representation}, the constrained minimization problem is equivalent to the one considered in Theorem \ref{Theo:gFisher} \review{with $\Spc N=\Spc N_\Lop$}, which ensures that all extreme points of the solution set are of the form $(p,w_\delta)$ with
$w_\delta=\sum_{k=1}^K a_k \delta(\cdot-\bx_k)$, $K\le M$, and $\bx_k\in \R^d$. Upon application of the (stable) right-inverse operator, this maps into
$s=\Lop_{\V \phi}^{-1}w_\delta + p$, where $p$ is a suitable component that is in the null space of the operator. Finally, we use the explicit kernel formula \eqref{Eq:kernelinverseg} and the procedure outlined at the end of Section \ref{Sec:Splines} to convert this representation into
\eqref{eq:spline}, which removes the artificial dependence upon $\V \phi$.
\end{proof}

\begin{proof}[Proof of Theorem \ref{Theo:L1spline2}]\ 
\review{
From the proof of Proposition \ref{Prop:lowerbound}, we know that the minimal singular value of the cross-product matrix $\revise{\M P}=[\M p_1 \cdots \M p_M]^T$ with $\revise{\M p_m=\V \nu(p_m)}\in \R^{N_0}$
is non-vanishing. The geometric implication is that ${\rm span}\{\M p\}_{m=1}^{M}=\R^{N_0}$. Since the corresponding system is redundant, we can always identify a subset of these row vectors that forms a basis of $\R^{N_0}$. Without loss of generality, we now assume that
this subset is $\{\M p_m\}_{m=1}^{N_0}$ and that the corresponding submatrix $\revise{\M P}_0=[\M p_1 \cdots \M p_{N_0}]^T$ is therefore invertible.
In other words, we have identified a reduced vector of measurement functionals $\V \nu_0=(\nu_1, \dots,\nu_{N_0})$ that is linearly independent with respect to $\Spc N_\Lop$.
This, in turn, allows us to construct the boundary functional $\V \phi_0=\revise{\M P}_0^{-1} \V \nu_0$ that meets the biorthogonal requirement $\V \phi_0(p_n)=\revise{\M P}_0^{-1}\V \nu_0(p_n)=\revise{\M P}_0^{-1}\M p_n=\M e_n$. In effect, this yields a biorthogonal system with the property that $\Spc N'_{\Lop}={\rm span}\{\nu_n\}_{n=1}^{N_0} \subset \Spc M'_\Lop(\R^d)$.
}

\review{
Coming back to our interpolation problem, we define $\M y=(\M y_0, \M y_1)$ with $\M y_0=(y_1,\dots,y_{N_0})\in \R^{N_0}$ and $\M y_1\in \R^{M-N_0}$.
Due to the biorthogonality of $(\V \phi_0,\V p)$, the unique element $q_0 \in \Spc N_\Lop$ such that $\V \nu_0(q_0)=\revise{\M P}_0\V \phi_0(q_0)=\M y_0$ is given by
$$
q_0=\sum_{n=1}^{N_0} b_n p_n
$$
with $\M b=(b_1,\dots, b_{N_0})=\revise{\M P}_0^{-1} \M y_0$. The other ingredient is Theorem \ref{Theo:gBeppoLevi}, which ensures that any 
$f \in \Spc M_\Lop(\R^d)$ has a unique decomposition as $f=\Lop_{\V \phi_0}^{-1}w+q$ with $q \in \Spc N_\Lop$ and $w \in \Spc M(\R^d)$. Now, the crucial property is that the boundary conditions 
$\V \phi_0(\Lop_{\V \phi_0}^{-1}w)=\M 0$ imply that $\V \nu_0(\Lop_{\V \phi_0}^{-1}w)=\M 0$ for all $w \in \Spc M_\Lop(\R^d)$.
This allows us rewrite the solution of our generalized interpolation problem as $f=\Lop_{\V \phi_0}^{-1}w_1 + q_0$, where
$$w_1=\arg \min_{w \in \Spc M(\R^d)} \|w\|_{\Spc M} \ \mbox{ s.t. } \ \V \nu_1(\Lop_{\V \phi_0}^{-1}w)=\M y_1-\V \nu_1(q_0).
$$
The result then follows from the continuity of $\Lop_{\V \phi_0}^{-1}$ and the reduced version of Theorem \ref{Theo:gFisher} with $\Spc N=\{0\}$.
}
\end{proof}

\review{
\section{Further Theoretical and Computational Issues}
\label{Sec:OpenProbs}
The analogy with the finite-dimensional theory of compressed sensing raises the fundamental theoretical question: Is it possible to provide conditions on the measurement operator $\V \nu$ 
such that a perfect recovery is possible for certain classes of signals; in particular, splines with a given number of knots? This is an open topic that calls for further investigation. Because the problem is formulated in the continuum, we suspect that it is much more difficult---if not impossible---to identify conditions that ensure unicity.
}

\review{While the reconstruction problem in Theorem \ref{Theo:L1spline}  is formulated in {\em analysis form} ({\em i.e.}, minimization of $\|\Lop s\|_{\Spc M}$), the interesting outcome is that the solution \eqref{eq:spline} is given in {\em synthesis form}, with the unusual twist that the underlying dictionary $\{\rho_\Lop(\cdot-\V \tau)\}_{\V \tau \in \R^d}$ of basis functions is infinite-dimensional and not even countable. This interpretation suggests a natural discretization which is to select
a finite subset of equally-spaced functions $\{\rho_\Lop(\cdot-\V \tau_n)\}_{n=1}^N$ with $N \gg M$ and to rely on linear programming for $\epsilon=0$, or quadratic programming for $\epsilon>0$, or some other convex optimization technique to numerically solve the underlying $\ell_1$-minimization problem.
We have preliminary evidence that this approach is feasible. In particular, we have considered the generalized interpolation scenario covered by Theorem 2 and  observed that the simplex algorithm performs well in the sense that it always returns a nonuniform $\Lop$-spline with a number of knots $K\le (M-N_0)$. The key theoretical question now is to establish the convergence of such a scheme as the sampling step gets smaller. 
}

\review{Since the space that is spanned by the null-space components of $\Lop$ and the integer shifts of $\rho_\Lop$ is the space of cardinal $\Lop$-splines (see \cite{Unser2005} for the generic case of an ordinary differential operator), one may also consider an alternative discretization that uses the corresponding B-spline basis functions, which are much better conditioned than Green's functions. This would bring us back to a numerical problem that is very similar to \eqref{Eq:penalized}, with the advantage of maintaining a direct control over the discretization error. In the case of a pure denoising problem, another possible option is to run the taut-string algorithm \cite{Mammen1997, Scherzer2005}
or some appropriate variation thereof.
}

\review{At any rate, we believe that the issue of the proper discretization of the reconstruction problem \eqref{eq:l1min} as well as the development of adequate numerical schemes are important research topics on their own right. For the cases where the solution is not unique, Theorem \ref{Theo:L1spline} also suggests a new computational challenge: the design of a minimization algorithm that systematically converges to an extremal point of the problem, the best solution being the spline that exhibits the minimal number of knots $K=\|\M a\|_0$.}



\appendix
\section{Proof of Theorem \ref{Theo:stableinverse}}
\label{App:Stable}

\begin{proof}
First, we establish the sufficiency of the stability condition by considering the signal $f(\bx)=\Op G\{w\}(\bx)=\int_{\R^d} g(\V x,\V y) w(\V y)\dint \V y$, where $w \in \Spc M(\R^d)$, and by constructing the estimate
\begin{align*}
|f(\bx)| (1+\|\bx\|)^{-\alpha_0}&= (1+\|\bx\|)^{-\alpha_0} \left| \int_{\R^d} g(\V x,\V y) w(\V y) \dint \V y \right| \\
&\le  (1+\|\bx\|)^{-\alpha_0} \esssup_{\V y \in \R^d} |g(\V x,\V y)| \; \|w\|_{\Spc M},
\end{align*}
which implies that
$$
\|f\|_{\infty,\alpha_0}=\|\Op G\{w\}\|_{\infty,\alpha_0} \le \left(\esssup_{\bx, \V y \in \R^d} |g(\V x,\V y)|\; (1+\|\bx\|)^{-\alpha_0}\right) \|w\|_{\Spc M}
$$
for all $w \in \Spc M(\R^d)$.
In doing so, we have shown that $$\|\Op G\|\le \esssup_{\bx,\V y \in \R^d}  |g(\V x,\V y)|\; (1+\|\bx\|)^{-\alpha_0} < \infty.$$
To prove necessity, we use the property that $g(\V x,\V y)=\Op G\{\delta(\cdot-\V y)\}(\bx)$, where the shifted Dirac impulse $\delta(\cdot-\V y)$ is included in 
$\Spc M(\R^d)$ with $\|\delta(\cdot-\V y)\|_{\Spc M}=1$. We then observe that, for each $\V y \in \R^d$,
$$
\|\Op G\{\delta(\cdot-\V y)\}\|_{\infty,\alpha_0}=\esssup_{\V x \in \R^d} (1+\|\bx\|)^{-\alpha_0}  |g(\V x,\V y)|.
$$
Moreover, $\Op G$ being bounded, we have that
$$\|\Op G\{\delta(\cdot-\V y)\}\|_{\infty,\alpha_0} \leq \lVert \delta(\cdot-\V y)\, \rVert_{\Spc M}\, \lVert G \rVert = \lVert G \rVert,$$
which means that 
$$\esssup_{\V x \in \R^d} (1+\|\bx\|)^{-\alpha_0}  |g(\V x,\V y)|\le \esssup_{\V x, \V y\in \R^d} (1+\|\bx\|)^{-\alpha_0}  |g(\V x,\V y)| \leq \lVert G\rVert < \infty.
$$
As we already know that the inequality holds in the other direction as well, we obtain $$\|\Op G\|=\esssup_{\bx,\V y \in \R^d}  |g(\V x,\V y)|\; (1+\|\bx\|)^{-\alpha_0},$$
which concludes the proof.
\end{proof}
\section{Proof of Theorem \ref{Theo:inverse}}
\label{App:Inverse}
\begin{proof}
\review{
We first establish the properties of the operator on Schwartz' space of smooth and rapidly-decreasing signals $\Spc S(\R^d)$ to avoid any technical problem related to splitting sums and interchanging integrals. We also rely on Schwartz's kernel theorem which states the equivalence
between the continuous
linear operators $\Op G: \Spc S(\R^d) \to \Spc S'(\R^d)$ and their Schwartz kernels (or general impulse response) $g \in \Spc S'(\R^d \times \R^d)$, meaning that two such operators are identical if and only if their kernels are equal---in the sense of distributions.}

Using the explicit representation of $g_{\V \phi}$ together with $\Lop\{\rho_{\Lop}\}=\delta$ (Dirac distribution) and $\Lop\{p_m\}=0$ for $m=1,\dots, N_0$, we then easily show that
$$
\Lop\Lop_{\V \phi}^{-1}\{\varphi\}=\varphi
$$
for all $\varphi \in \Spc S(\R^d)$.
Next, we invoke the biorthogonality property $\langle\phi_m, p_n \rangle=\delta_{m-n}$ (Kronecker delta) to evaluate the inner product of \eqref{Eq:kernelinverseg} with $\phi_m$
as
\begin{align*}
\langle \phi_m,\Lop_{\V \phi}^{-1}\{ \varphi\} \rangle&=\langle \phi_m,\rho_{\Lop} \ast \varphi  \rangle -  \sum_{n=1}^{N_0}  \langle \phi_m, p_n \rangle \langle q_n,\varphi \rangle\\
&=\langle \phi_m,\rho_{\Lop} \ast \varphi  \rangle - \langle q_m,\varphi \rangle \\
&=\langle \phi_m,\rho_{\Lop} \ast \varphi  \rangle - \langle \phi_m,\rho_\Lop \ast \varphi \rangle=0,
\end{align*}
which proves that the boundary conditions are satisfied.

Let us now consider another operator $\Op L^{-1}$ that is also a right inverse of $\Lop$. Clearly, the result of the action of the two operators can only differ by a component that is in the null space of $\Lop$ so that $(\Op L^{-1}\varphi - \Lop_{\V \phi}^{-1}\varphi)=q \in \Spc N_{\Lop}$. Since $(\V \phi, \V p)$ forms a biorthogonal system, $q$ is uniquely determined by $\V \phi (q)$, which implies that the right-inverse operator that imposes the condition $\V \phi(\Op L^{-1}\varphi)=\M 0$ is unique. 
\\
%

\review{Next, we define $C=\sup_{\bx, \V y \in \R^d} \left(|g_{\V \phi}(\V x,\V y)|\;(1+\|\bx\|)^{-n_0}\right)<\infty$ and extract the generic continuity bound
$$
\|\Lop^{-1}_{\V \phi}\varphi\|_{\infty,n_0} \le C \|\varphi\|_{\Spc M}
$$
from the the proof of Theorem \ref{Theo:stableinverse}.
This allows us to extend the domain of the operator from $\Spc S(\R^d)$ to $\Spc M(\R^d)$, by the Hahn-Banach theorem. Since $\Spc S(\R^d)$ is dense in $\Spc M(\R^d)$, we can do likewise for the right-inverse property and the boundary conditions by invoking the continuity of $\Lop^{-1}_{\V \phi}$ and $\V \phi$.}

\review{Alternatively, one can establish this extension indirectly by identifying a specific Banach space $\Spc M_{\Lop,\V \phi}(\R^d)$ and then by showing that it is the bijective image of $\Spc M(\R^d)$ by $\Lop^{-1}_{\V \phi}$ (see proof of Statement 1 in Theorem \ref{Theo:gBeppoLevi},
which also nicely settles the issue of stability).}
 %
%
\end{proof}

\section{Proof of Theorem \ref{Theo:Predual}}
\label{App:Predual}
\begin{proof}
\review{
First, we prove that $C_{\Lop, \V p}(\R^d)$ is isometrically isomorphic to $C_0(\R^d)$.
For any $\varphi \in C_0(\R^d)$, $\Lop^* \varphi = 0$ implies that $\varphi \in \mathcal{N}_{\Lop^\ast} \cap C_0(\R^d) =\{0\}$ (since the basis functions of the null space do not vanish at infinity); \emph{i.e.}, $\varphi = 0$. 
$\Lop^*$ is therefore injective,  and hence bijective since it is surjective $C_0(\R^d) \to C_{\Lop,\V p}(\R^d)$ by definition.
In particular, this implies that the adjoint $\Lop^{-1*}_{\V \phi}$ of the operator $\Lop^{-1}_{\V \phi}$ defined by \eqref{Eq:kernelinverseg} is the inverse of $\Lop^*$ from $C_{\Lop,\V p}(\R^d)$ to $C_0(\R^d)$. Therefore, $C_{\Lop,\V p}(\R^d)$ inherits the Banach-space structure of $C_0(\R^d)$ for the norm $\lVert \Lop^{-1}_{\V \phi} f \rVert_\infty$.}

\review{If $f \in C_{\Lop,\V p}(\R^d)$,
then $f = \Lop^* \varphi$ with $\varphi \in C_0(\R^d)$ and $\langle f , p \rangle = \langle \varphi , \Lop p \rangle = 0$ for any $p \in \mathcal{N}_\Lop$. The unique element of $\mathcal{N}'_\Lop$ orthogonal to $\mathcal{N}_\Lop$ is $0$ so that the sum $C_{\Lop,\V p}(\R^d)  \oplus  \mathcal{N}'_\Lop$ is direct. $\mathcal{N}_\Lop'$ is a (finite-dimensional) Banach space for the norm $ \|\V p(f)\|_2$, implying that $C_\Lop(\R^d) = C_{\Lop,\V p}(\R^d)  \oplus  \mathcal{N}'_\Lop$ is a Banach space for \eqref{Eq:Dualnorm}.}

\review{Next, we recall that $\Lop^{-1}_{\V \phi}$ is continuous and bijective from $\mathcal{M}_{\Lop, \V \phi} (\R^d)$ to $\mathcal{M}(\R^d)$ (Theorem \ref{Theo:gBeppoLevi}), while we have just shown that its adjoint is continuous and bijective from $C_{\Lop,\V p}(\R^d)$ to $C_0(\R^d)$. 
Knowing that $\big(C_0(\R^d)\big)' = \mathcal{M}(\R^d)$, this implies that $\big(C_{\Lop,\V p}(\R^d)\big)' = \mathcal{M}_{\Lop, \V \phi} (\R^d)$. Finally, we have
\begin{equation*}
	(C_{\Lop}(\R^d))' = ( C_{\Lop,\V p}(\R^d)  \oplus  \mathcal{N}'_\Lop)'  = (C_{\Lop,\V p}(\R^d))' \oplus (\mathcal{N}_\Lop')' =  \mathcal{M}_{\Lop, \V \phi} (\R^d) \oplus \mathcal{N}_\Lop = \mathcal{M}_\Lop(\R^d),
\end{equation*}
as expected.}

\review{
To establish the weighted $L_1$-norm inequality, we first observe that the hypotheses
$f \in L_{1,-n_0}(\R^d)$ and $p_n \in L_{\infty,n_0}(\R^d)$ imply that  
$|\langle f, p_n\rangle| \le \|p_n\|_{\infty,n_0}\, \|f\|_{L_{1,-n_0}}$ (by the H\"older inequality).
Likewise, using the stability bound \eqref{Eq:Stabilitybound}, we get
\begin{align*}
|\Lop_{\V \phi}^{-1\ast}\{f\}(\V y)|&=\left| \int_{\R^d} g_{\V \phi}(\V x, \V y)\; f(\V x) \dint \V x \right|\\
& \le  \int_{\R^d} C_{\V \phi} (1+\|\V x\|)^{n_0} |f(\V x)| \dint \V x = C_{\V \phi} \|f\|_{L_1,-n_1},
\end{align*}
which yields $\|\Lop_{\V \phi}^{-1\ast}\{f\}\|_{\infty} \le C_{\V \phi} \|f\|_{L_1,-n_1}$. The desired result is then obtained from the summation of these individual bounds.}
 \end{proof}

\section{Proof of Theorem \ref{Theo:gFisher}}
\label{App:gFisher}

The proof follows the same steps as the original one of Fisher and Jerome  \cite[Theorem 1]{Fisher1975}. 
Yet, it differs in the assumptions and technicalities ({\em i.e.}, the consideration of the non-compact domain $\R^d$ and the use of explicit bounds). We have done our best to make it self-contained.  

As preparation, we recall that the weak*-topology on $\mathcal{M}(\R^d) = \big(\mathcal{C}_0(\R^d)\big)'$ is the locally convex topology associated with the family of semi-norms $p_\varphi(w) = |\langle w,\varphi\rangle|$ for $\varphi\in C_0(\R^d)$.  In particular, a sequence of elements $w_n \in \mathcal{M}(\R^d)$ converges to $0$ for the weak*-topology if and only if    $\langle w_n,\varphi\rangle \rightarrow 0 $ for every $\varphi\in C_0(\R^d)$.  A subset of $\mathcal{M}(\R^d)$ is said to be weak*-closed (weak*-compact, respectively) if it is closed (compact, respectively) for the weak*-topology. We shall use Propositions \ref{prop:BAM}  and \ref{prop:BAH}, which are consequences of the Banach-Alaoglu theorem and its variations \cite[p.68]{RudinFA}. 

\begin{proposition} \label{prop:BAM} Compactness in the weak*-topology of $\mathcal{M}(\R^d)$.
	\begin{itemize}
	\item  For every $\alpha >0$, the set $\mathcal{B}_\alpha = \{ w\in \mathcal{M}(\R^d): \ \lVert w \rVert_{\Spc M} \leq \alpha \}$ is weak*-compact in $\mathcal{M}(\R^d)$.
	\item If $(w_n)$ is a sequence in $\mathcal{M}(\R^d)$, bounded for the TV-norm, then we can extract a subsequence that converges in $\mathcal{M}(\R^d)$ for the weak*-topology.  
	\end{itemize}
\end{proposition}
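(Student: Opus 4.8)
The plan is to recognize that both statements are standard consequences of the duality $\mathcal{M}(\R^d) = \big(C_0(\R^d)\big)'$ furnished by the Riesz-Markov theorem, together with the separability of the predual $C_0(\R^d)$.

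For the first item, I would observe that $\mathcal{B}_\alpha$ is precisely the closed ball of radius $\alpha$ in the dual space $\big(C_0(\R^d)\big)'$. By the Banach-Alaoglu theorem \cite[p.68]{RudinFA}, the closed unit ball of the dual of any Banach space is weak*-compact; scaling by $\alpha$ preserves this property, since the weak*-topology is invariant under the dilation $w \mapsto \alpha w$. Hence $\mathcal{B}_\alpha$ is weak*-compact, which settles the first bullet point.

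For the second item, I expect the main obstacle to be that weak*-compactness alone does \emph{not} imply sequential compactness for a general topological space; the extra ingredient is that $C_0(\R^d)$ is a \emph{separable} Banach space. I would invoke the classical fact that, when the predual $X$ is separable, the weak*-topology restricted to any norm-bounded subset of $X'$ is metrizable: fixing a countable dense sequence $(\varphi_k)_{k\ge1}$ in the unit ball of $C_0(\R^d)$, the metric
$$d(w,w')=\sum_{k=1}^{\infty} 2^{-k}\,\big|\langle w-w',\varphi_k\rangle\big|$$
induces the weak*-topology on such bounded sets. Given a sequence $(w_n)$ bounded in total variation, I would set $\alpha=\sup_n \|w_n\|_{\Spc M}<\infty$, so that $(w_n)\subset \mathcal{B}_\alpha$. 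By the first item, $\mathcal{B}_\alpha$ is weak*-compact, and by metrizability it is a compact metric space; therefore every sequence in it admits a convergent subsequence. Extracting such a subsequence yields the claimed weak*-convergent subsequence.

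The only genuinely nontrivial point is the metrizability step, which rests on the separability of $C_0(\R^d)$; this in turn follows because $C_0(\R^d)$ is the uniform closure of the compactly supported continuous functions, which are approximable by a countable family (for instance, piecewise-polynomial functions with rational data on rational boxes). Once separability is secured, the remaining arguments are direct applications of Banach-Alaoglu and of the principle that compactness coincides with sequential compactness for metric spaces.
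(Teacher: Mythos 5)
Your proof is correct and follows exactly the route the paper itself indicates: the paper justifies the first bullet by citing the Banach--Alaoglu theorem and the second by remarking that it ``is valid because the space $C_0(\R^d)$ is separable,'' which is precisely the metrizability-of-bounded-sets argument you spell out. Your write-up simply makes explicit the details (scaling of the unit ball, the metric induced by a countable dense family, and the separability of $C_0(\R^d)$) that the paper leaves to the reader.
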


\review{The second point of Proposition \ref{prop:BAM} is valid because the space $C_0(\R^d)$ is separable.}
These properties also carry over to the Banach space $\mathcal{H}=\Spc M(\R^d)  \oplus \mathcal{N}=\big (C_0(\R^d) \oplus \mathcal{N}' \big)'$, which is endowed with the corresponding weak*-topology:
A sequence $(w_n,p_n)$ in $\mathcal{H}$ vanishes for the weak*-topology if and only if $w_n$ vanishes for the weak*-topology of $\mathcal{M}(\R^d)$ and $\lVert p_n \rVert_{\Spc N}\to 0$. 

\vspace*{2ex}
\begin{proposition} \label{prop:BAH} Compactness in the weak*-topology of $\mathcal{H}$.
	\begin{itemize}
	\item  For every $\alpha_1, \alpha_2 >0$, the set $\mathcal{B}_{\alpha_1,\alpha_2} = \{ (w, p) \in \mathcal{H}: \ \lVert w \rVert_{\Spc M} \leq \alpha_1, \ \lVert p \rVert_{\mathcal{N}} \leq \alpha_2 \}$ is weak*-compact in $\mathcal{H}$.
	\item If $(w_n,p_n)$ is a sequence in $\mathcal{H}$ such that $\lVert w_n \rVert_{\Spc M} + \lVert p_n \rVert_{\mathcal{N}}$ is bounded, then we can extract a subsequence that converges in $\mathcal{H}$ for the weak*-topology.
	\end{itemize}
\end{proposition}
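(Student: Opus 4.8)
The plan is to deduce both assertions from their one-component analogues in Proposition~\ref{prop:BAM}, exploiting the product structure of $\Spc H=\Spc M(\R^d)\oplus\Spc N=\big(C_0(\R^d)\oplus\Spc N'\big)'$. The starting point, already recorded just above the statement, is that the weak*-topology on $\Spc H$ factors as the product of the weak*-topology on $\Spc M(\R^d)$ and the norm topology on the finite-dimensional space $\Spc N$. Indeed, pairing $(w,p)\in\Spc H$ with the predual elements $(\varphi,0)$ and $(0,\phi)$ separates the two components and recovers exactly the componentwise description of weak*-convergence stated above: convergence in $\Spc H$ amounts to weak*-convergence of the $\Spc M$-component together with weak---hence, by finite-dimensionality, norm---convergence of the $\Spc N$-component. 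This identification is what lets compactness and convergence be checked componentwise.

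For the first point, I would write the box as a Cartesian product,
$$
\mathcal{B}_{\alpha_1,\alpha_2}=\mathcal{B}_{\alpha_1}\times\{p\in\Spc N:\ \|p\|_{\Spc N}\le\alpha_2\},
$$
where $\mathcal{B}_{\alpha_1}=\{w\in\Spc M(\R^d):\ \|w\|_{\Spc M}\le\alpha_1\}$ is weak*-compact by the first point of Proposition~\ref{prop:BAM}, and the second factor is a closed, bounded subset of the finite-dimensional space $\Spc N$, hence compact by the Heine--Borel theorem. Since a finite product of compact spaces is compact (the elementary case of Tychonoff's theorem), and since the weak*-topology on $\Spc H$ is precisely the corresponding product topology, $\mathcal{B}_{\alpha_1,\alpha_2}$ is weak*-compact.

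For the sequential statement, boundedness of $\|w_n\|_{\Spc M}+\|p_n\|_{\Spc N}$ forces each of the sequences $(\|w_n\|_{\Spc M})$ and $(\|p_n\|_{\Spc N})$ to be bounded separately. I would first invoke the second point of Proposition~\ref{prop:BAM}---valid because $C_0(\R^d)$ is separable---to extract a subsequence along which $w_{n_k}\to w$ for the weak*-topology of $\Spc M(\R^d)$. Along this subsequence the vectors $p_{n_k}$ remain in a fixed ball of the finite-dimensional space $\Spc N$, so Bolzano--Weierstrass yields a further subsequence with $p_{n_{k_j}}\to p$ in norm. By the componentwise description of weak*-convergence, the twice-extracted subsequence $(w_{n_{k_j}},p_{n_{k_j}})$ then converges to $(w,p)$ in the weak*-topology of $\Spc H$.

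I do not expect a genuinely hard step here: the result is a packaging of Banach--Alaoglu (through Proposition~\ref{prop:BAM}) with elementary finite-dimensional compactness. The only points demanding care are the identification of the weak*-topology on $\Spc H$ with the product topology, so that both compactness and sequential limits split across the two components, and the observation that on the finite-dimensional factor $\Spc N$ the weak*-topology collapses to the norm topology, which is what legitimizes the use of Heine--Borel and Bolzano--Weierstrass. One should also note that the separability needed for the sequential extraction is inherited from $C_0(\R^d)$, since adjoining the finite-dimensional $\Spc N'$ preserves separability of the predual.
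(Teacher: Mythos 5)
Your proof is correct and follows exactly the route the paper intends: the paper states Proposition~\ref{prop:BAH} without a written proof, asserting only that the properties of Proposition~\ref{prop:BAM} ``carry over'' to $\Spc H$ via the componentwise description of its weak*-topology, and your argument simply makes that carry-over explicit (product of a weak*-compact ball with a Heine--Borel-compact ball, then iterated extraction using Proposition~\ref{prop:BAM} and Bolzano--Weierstrass). No gap; your added care about the product-topology identification and the separability of the predual is exactly the right detail to supply.
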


\begin{proof}[Proof of Theorem \ref{Theo:gFisher}]

The proof is divided in two parts. First, we show that $\Spc V$ is a nonempty, convex, and weak*-compact subspace of $\mathcal{H}$. This allows us to specify $\Spc V$ by means of its extremal points via the Krein-Milman theorem. Second, we show that the extremal points have the announced form. We set $\beta =\inf_{(w,p) \in \mathcal{U}} \lVert w \rVert_{\Spc M}.$

\paragraph{Part I: $\Spc V$ is nonempty, convex, and weak*-compact}
	
Since $\V F$ is weak*-continuous, it is also
continuous $\Spc H \to \R^M$ in the topology of $\Spc H$. Hence, there exists a constant $A>0$ such that
\begin{align}
\label{eq:boundF}
\|\V F(w,p)\|_2\le A(\|w\|_{\Spc M}+ \|p\|_{\Spc N_\Lop}).
\end{align}
Let us consider a sequence $(w_n, p_n)_{n \in \N}$ in $\mathcal{U}$ such that $\|w_n\|_{\Spc M}$ decreases to $\beta$. In particular, $\lVert w_n \rVert_{\Spc M}$ is bounded \review{above} by $\lVert w_0 \rVert_{\Spc M}$. We set $M = \max_{\V x \in \mathcal{C}} \lVert \V x \rVert$. 
Using respectively \eqref{eq:frame}, \eqref{eq:boundF}, and $\lVert \V F(w_n,p_n) \rVert_2 \leq M$ (since $(w_n, p_n) \in \mathcal{U}$), we deduce the inequalities
\begin{align}
	\lVert p_n \rVert_{\mathcal{N}} &\leq   \frac{1}{B} \lVert \V F (0, p_n) \rVert_2 =   \frac{1}{B} \lVert \V F (w_n, p_n) - \V F (w_n,0) \rVert_2  \nonumber\\& \leq  \frac{1}{B} ( \lVert \V F (w_n, p_n) \rVert_2 + \lVert \V F (w_n,0) \rVert_2)   \nonumber \\
					&\leq  \frac{1}{B} (M + A \lVert w_n \rVert_{\Spc M} ) \leq  \frac{1}{B} (M + A \lVert w_0 \rVert_{\Spc M} )\label{eq:boundspn}, 
\end{align}
which shows that $p_n$ is bounded. We can then extract a sequence $(w_{s_n}, p_{s_n})$ from $(w_n, p_n)$ that converges to $(w_{\infty}, p_{\infty}) \in \mathcal{H}$ for the weak*-topology (by Proposition \ref{prop:BAH}). 
\review{Since $\|w_n\|\to\beta$ and $\|w_{s_n}\|$ is a subsequence, it must also converge to $\beta$. }


\review{On the other hand, the set $\mathcal{U} = \V F^{-1} (\mathcal{C})$ is weak*-closed in $\mathcal{H}$, as the preimage of a closed set by a weak*-continuous function $\V F$. Consequently, $(w_\infty, p_\infty)$ is the weak*-limit of a sequence of elements in $\mathcal{U}$. We therefore deduce that $(w_\infty, p_\infty)\in \mathcal{U}$, so that $\lVert w_\infty \rVert_{\Spc M}\geq\beta$. In light of the previous inequality, this yields $\lVert w_\infty \rVert_{\Spc M} = \beta$, which proves that $\Spc V$ is not empty.} \\

	In addition to being weak*-closed, the set $ \mathcal{U} = \V F^{-1}(\mathcal{C})$ is convex because $\mathcal{C}$ is convex and $\V F$ linear. 
Likewise, $\mathcal{V} = \mathcal{U} \bigcap \{(w,p): \ \lVert w \rVert_{\Spc M} \leq \beta \}$ is convex, weak*-closed as the intersection of two sets with the same property. Finally, for $(w,p) \in \mathcal{V}$, we show that $\lVert p\rVert_{\mathcal{N}} \leq \frac{M+A \lVert w \rVert_{\Spc M}}{B} = \frac{M + A \beta}{B} = \gamma$, based on the same inequalities as in \eqref{eq:boundspn}. Therefore, we have
	\begin{equation*}
	\mathcal{V} \subset \{ (w,p) \in \mathcal{H}: \ \lVert w \rVert_{\Spc M} \leq \beta, \ \lVert p \rVert_{\Spc N} \leq \gamma\},
	\end{equation*}
	where the set on the right-hand side is weak*-compact, due to Proposition \ref{prop:BAH}. \review{Since any weak*-closed set included in a weak*-compact set is necessarily weak*-compact, this shows that $\mathcal{V}$ is weak*-compact.} \\
		
We are now in the position to apply the Krein-Milman theorem \cite[p. 75]{RudinFA} to the convex weak*-compact set $ \Spc V \subset \mathcal{H}$, which tells us that ``$ \Spc V$ is the closed convex hull of its extreme points in $\mathcal{H}$ endowed with the weak*-topology''. This leads us to the final part of the proof, which is the characterization of those extreme points.
	
\paragraph{Part II: The extreme points of $ \Spc V$ are of the form \eqref{eq:extreme_w}} 

We shall prove that a necessary condition for $(w,p)$ to be an extreme point of $\Spc V$ is that there are no disjoint Borelian sets $E_1, \dotsc, E_{M+1} \subset \R^d$ such that $w (E_m) \neq 0$ for $m=1,\dotsc,M+1$. The only elements of $\mathcal{M}(\R^d)$ satisfying this condition are precisely those described by \eqref{eq:extreme_w}.  \\

We shall proceed by contradiction and assume that there exist disjoint sets $E_1,\dotsc,$ $E_{M+1}$  such that $w(E_m) \neq 0$ for all $m$. 

We denote the restriction of $w$ to $E_m$ as $w_m=w  \One_{E_m}$. We also define $E = \R^d \backslash \bigcup_m E_m$, and $\bar{w} = w  \One_E$  with $\|w\|_{\Spc M}=\beta$.
For $m=1, \dotsc , M+1$, we set $\V y_m = \V F(w_m,p) \in \mathbb{R}^M$. Since any collection of $(M+1)$ vectors in $\R^{M}$ is linearly dependent, there exists $(c_m)_{1\leq m\leq M+1} \neq \V 0$ such that $\sum_{m=1}^{M+1} c_m \V y_m = \V 0$. 

Let $\mu = \sum_{m=1}^{M+1} c_mw_m \in \mathcal{M}(\R^d)$ and $\epsilon \in (-\epsilon_{\max},\epsilon_{\max})$ with $\epsilon_{\max}=1/ \max_m{|c_m|}$, so that $(1 + \epsilon c_m)>0$ and $(1 - \epsilon c_m) >0$ for all $m$. \review{By construction, we have that
\begin{equation}
	\V F ( \mu, p) = \sum_{m=1}^{M+1} c_m \V F (w_m , p) = \sum_{m=1}^{M+1} c_m \V y_m = \V 0
\end{equation}
and, therefore, that $\V F (w \pm \epsilon \mu,p) = \V F (w, p)$. Hence,
 $$ (w \pm \epsilon \mu ,p)   \in \mathcal{U}.$$
Moreover, $w \pm \epsilon \mu = \bar{w} + \sum_{m=1}^{M+1} (1 \pm \epsilon c_m) w_m$.  Since the measures $\bar{w}, w_1, \dotsc,$ $w_{M+1}$ have disjoint supports and $(1 \pm \epsilon c_m) > 0$, we have}
\begin{align}
	\lVert w \pm \epsilon \mu \rVert_{\Spc M} &= \lVert\bar{w} \rVert_{\Spc M} + \sum_{m=1}^{M+1} (1\pm \epsilon c_m) \lVert w_m \rVert_{\Spc M} \nonumber \\
		&= \lVert\bar{w} \rVert_{\Spc M} + \sum_{m=1}^{M+1} \lVert w_m \rVert_{\Spc M} \pm \epsilon \sum_{m=1}^{M+1} c_m  \lVert w_m \rVert_{\Spc M} \nonumber \\
		&=  \lVert w \rVert_{\Spc M} \pm \epsilon \sum_{m=1}^{M+1} c_m  \lVert w_m \rVert_{\Spc M}  \nonumber \\
		&= \beta \pm \epsilon \sum_{m=1}^{M+1} c_m  \lVert w_m \rVert_{\Spc M}.
\end{align}
If $\sum_{m=1}^{M+1} c_m  \lVert w_m \rVert_{\Spc M} \neq 0$, then 
we either have $\lVert w + \epsilon \mu \rVert_{\Spc M} <\beta$ or $\lVert w  - \epsilon \mu \rVert_{\Spc M} < \beta$, which is impossible since the minimum over $\mathcal{U}$ is $\beta$. Hence, 
$$\sum_{m=1}^{M+1} c_m  \lVert w_m \rVert_{\Spc M} = 0$$
and $\lVert w + \epsilon \mu \rVert_{\Spc M} = \lVert w - \epsilon \mu \rVert_{\Spc M} = \beta$, which translates into $(w + \epsilon \mu, p)$ and $(w - \epsilon \mu, p)$ being included in $\Spc V$. This, in turn, implies that
$(w,p) = \frac{1}{2}(w + \epsilon \mu, p) +  \frac{1}{2}(w - \epsilon \mu, p)$
 is not an extreme point of $\Spc V$.  $\hspace{\stretch{1}}$.
 \end{proof}

\section*{Acknowledgments}
The research was partially supported by the Swiss National Science
Foundation under Grant 200020-162343, the Center for Biomedical Imaging (CIBM) of the Geneva-Lausanne Universities and EPFL, and the European Research Council under Grant 692726 (H2020-ERC Project GlobalBioIm).
The authors are thankful to H. Gupta for helpful discussions. \review{They are also grateful to the three anonymous reviewers for their thoughtful suggestions which have helped improve the manuscript}.


\bibliography{L1bib}

\begin{thebibliography}{10}

\bibitem{Adcock2011}
{\sc B.~Adcock and A.~Hansen}, {\em Generalized sampling and
  infinite-dimensional compressed sensing}, Foundations of Computional
  Mathematics,  (2015), pp.~1--61.

\bibitem{Adcock2013}
{\sc B.~Adcock, A.~C. Hansen, C.~Poon, and B.~Roman}, {\em Breaking the
  coherence barrier: A new theory for compressed sensing}, Forum of
  Mathematics, Sigma,  (to appear).

\bibitem{Beck2009}
{\sc A.~Beck and M.~Teboulle}, {\em Fast gradient-based algorithms for
  constrained total variation image denoising and deblurring problems}, IEEE
  Transactions on Image Processing, 18 (2009), pp.~2419--2434.

\bibitem{Beck2009b}
{\sc A.~Beck and M.~Teboulle}, {\em A fast iterative shrinkage-thresholding
  algorithm for linear inverse problems}, SIAM Journal on Imaging Sciences, 2
  (2009), pp.~183--202.

\bibitem{Bredies2013}
{\sc K.~Bredies and H.~Pikkarainen}, {\em Inverse problems in spaces of
  measures}, ESAIM: Control, Optimisation and Calculus of Variations, 19
  (2013), pp.~190--218.

\bibitem{Bruckstein2009}
{\sc A.~M. Bruckstein, D.~L. Donoho, and M.~Elad}, {\em From sparse solutions
  of systems of equations to sparse modeling of signals and images}, SIAM
  Review, 51 (2009), pp.~34--81.

\bibitem{Candes2008a}
{\sc E.~J. Cand{\`e}s}, {\em The restricted isometry property and its
  implications for compressed sensing}, Comptes Rendus de l'Acad{\'e}mie des
  Sciences, 346 (2008), pp.~589--592.

\bibitem{Candes2013b}
{\sc E.~J. Cand{\`e}s and C.~Fernandez-Granda}, {\em Super-resolution from
  noisy data}, Journal of Fourier Analysis and Applications, 19 (2013),
  pp.~1229--1254.

\bibitem{Candes2007}
{\sc E.~J. Cand{\`e}s and J.~Romberg}, {\em Sparsity and incoherence in
  compressive sampling}, Inverse Problems, 23 (2007), pp.~969--985.

\bibitem{Candes2006}
{\sc E.~J. Cand{\`e}s, J.~K. Romberg, and T.~Tao}, {\em Stable signal recovery
  from incomplete and inaccurate measurements}, Communications on Pure and
  Applied Mathematics, 59 (2006), pp.~1207--1223.

\bibitem{Chambolle2004}
{\sc A.~Chambolle}, {\em An algorithm for total variation minimization and
  applications}, Journal of Mathematical Imaging and Vision, 20 (2004),
  pp.~89--97.

\bibitem{Cohen2003}
{\sc A.~Cohen, W.~Dahmen, I.~Daubechies, and R.~DeVore}, {\em Harmonic analysis
  of the space {BV}}, Revista Matematica Iberoamericana, 19 (2003),
  pp.~235--263.

\bibitem{Dahmen:Micchelli:1987}
{\sc W.~Dahmen and C.~Micchelli}, {\em On theory and application of exponential
  splines}, in Topics in Multivariate Approximation, C.~Chui, L.~Schumaker, and
  F.~Utreras, eds., Academic Press, New York, 1987, pp.~37--46.

\bibitem{Daubechies2004}
{\sc I.~Daubechies, M.~Defrise, and C.~De~Mol}, {\em An iterative thresholding
  algorithm for linear inverse problems with a sparsity constraint},
  Communications on Pure and Applied Mathematics, 57 (2004), pp.~1413--1457.

\bibitem{deBoor1978}
{\sc C.~de~Boor}, {\em A Practical Guide to Splines}, Springer-Verlag, New
  York, 1978.

\bibitem{Denoyelle2015support}
{\sc Q.~Denoyelle, V.~Duval, and G.~Peyr{\'e}}, {\em Support recovery for
  sparse deconvolution of positive measures}, arXiv preprint arXiv:1506.08264,
  (2015).

\bibitem{Deny1954}
{\sc J.~Deny and J.-L. Lions}, {\em Les espaces du type de {B}eppo {L}evi},
  Annales de l'Institut Fourier, 5 (1954), pp.~305--370.

\bibitem{Dey2006}
{\sc N.~Dey, L.~Blanc-F{\'e}raud, C.~Zimmer, P.~Roux, Z.~Kam, J.-C.
  Olivo-Marin, and J.~Zerubia}, {\em {R}ichardson-{L}ucy algorithm with total
  variation regularization for {3D} confocal microscope deconvolution},
  Microscopy Research and Technique, 69 (2006), pp.~260--266.

\bibitem{Donoho2006}
{\sc D.~L. Donoho}, {\em Compressed sensing}, IEEE Transactions on Information
  Theory, 52 (2006), pp.~1289--1306.

\bibitem{Donoho2003}
{\sc D.~L. Donoho and M.~Elad}, {\em Optimally sparse representation in general
  (nonorthogonal) dictionaries via {$\ell_1$} minimization}, Proceedings of the
  National Academy of Sciences, 100 (2003), pp.~2197--2202.

\bibitem{Duchon1977}
{\sc J.~Duchon}, {\em Splines minimizing rotation-invariant semi-norms in
  {S}obolev spaces}, in Constructive Theory of Functions of Several Variables,
  W.~Schempp and K.~Zeller, eds., Springer-Verlag, Berlin, 1977, pp.~85--100.

\bibitem{Duval2015}
{\sc V.~Duval and G.~Peyr{\'e}}, {\em Exact support recovery for sparse spikes
  deconvolution}, Foundations of Computational Mathematics, 15 (2015),
  pp.~1315--1355.

\bibitem{Elad2010b}
{\sc M.~Elad}, {\em Sparse and Redundant Representations. From Theory to
  Applications in Signal and Image Processing}, Springer, 2010.

\bibitem{Elda2015}
{\sc Y.~Eldar}, {\em Sampling Theory: Beyond Bandlimited Systems}, Cambridge
  University Press, 2015.

\bibitem{Fernandez2016}
{\sc C.~Fernandez-Granda}, {\em Super-resolution of point sources via convex
  programming}, Information and Inference, 5 (2016), pp.~251--303.

\bibitem{Figueiredo2003}
{\sc M.~A.~T. Figueiredo and R.~D. Nowak}, {\em An {EM} algorithm for
  wavelet-based image restoration}, IEEE Transactions on Image Processing, 12
  (2003), pp.~906--916.

\bibitem{Fisher1975}
{\sc S.~Fisher and J.~Jerome}, {\em Spline solutions to {$L^1$} extremal
  problems in one and several variables}, Journal of Approximation Theory, 13
  (1975), pp.~73--83.

\bibitem{Foucart2013}
{\sc S.~Foucart and H.~Rauhut}, {\em A Mathematical Introduction to Compressive
  Sensing}, Springer, 2013.

\bibitem{Gelfand-Shilov1964}
{\sc I.~M. Gelfand and G.~Shilov}, {\em Generalized Functions. {V}ol. 1.
  {P}roperties and Operations}, Academic Press, New York, USA, 1964.

\bibitem{Goldstein2009}
{\sc T.~Goldstein and S.~Osher}, {\em The split {B}regman method for
  {$L_1$}-regularized problems}, SIAM Journal on Imaging Sciences, 2 (2009),
  pp.~323--343.

\bibitem{Grafakos2004}
{\sc L.~Grafakos}, {\em Classical and Modern Fourier Analysis}, Prentice Hall,
  2004.

\bibitem{HelOr1998}
{\sc Y.~Hel-Or and P.~C. Teo}, {\em Canonical decomposition of steerable
  functions}, Journal of Mathematical Imaging and Vision, 9 (1998), pp.~83--95.

\bibitem{Hormander1990}
{\sc L.~H{\"o}rmander}, {\em The Analysis of Linear Partial Differential
  Operators. {I}. {D}istribution Theory and {F}ourier Analysis}, vol.~256,
  Springer-Verlag, Berlin, 2nd~ed., 1990.

\bibitem{Kurokawa1988}
{\sc T.~Kurokawa}, {\em Riesz potentials, higher {R}iesz transforms and {B}eppo
  {L}evi spaces}, Hiroshima Math. J, 18 (1988), pp.~541--597.

\bibitem{Lavery2000}
{\sc J.~E. Lavery}, {\em Shape-preserving, multiscale fitting of univariate
  data by cubic {$L_1$} smoothing splines}, Computer Aided Geometric Design, 17
  (2000), pp.~715--727.

\bibitem{Lustig2007}
{\sc M.~Lustig, D.~L. Donoho, and J.~M. Pauly}, {\em Sparse {MRI}: The
  application of compressed sensing for rapid {MR} imaging}, {Magnetic
  Resonance in Medicine}, 58 (2007), pp.~1182--1195.

\bibitem{Madych1990}
{\sc W.~R. Madych and S.~A. Nelson}, {\em Polyharmonic cardinal splines},
  Journal of Approximation Theory, 60 (1990), pp.~141--156.

\bibitem{Mammen1997}
{\sc E.~Mammen and S.~van~de Geer}, {\em Locally adaptive regression splines},
  Annals of Statistics, 25 (1997), pp.~387--413.

\bibitem{Ramani2011}
{\sc S.~Ramani and J.~Fessler}, {\em Parallel {MR} image reconstruction using
  augmented {L}agrangian methods}, IEEE Transactions on Medical Imaging, 30
  (2011), pp.~694 --706.

\bibitem{Rauhut2008}
{\sc H.~Rauhut, K.~Schnass, and P.~Vandergheynst}, {\em Compressed sensing and
  redundant dictionaries}, IEEE Transactions on Information Theory, 54 (2008),
  pp.~2210--2219.

\bibitem{Rudin1992}
{\sc L.~I. Rudin, S.~Osher, and E.~Fatemi}, {\em Nonlinear total variation
  based noise removal algorithms}, Physica D, 60 (1992), pp.~259--268.

\bibitem{Rudin1987}
{\sc W.~Rudin}, {\em Real and Complex Analysis}, McGraw-Hill, New York,
  3rd~ed., 1987.

\bibitem{RudinFA}
{\sc W.~Rudin}, {\em Functional Analysis}, McGraw-Hill, New York, 1991.

\bibitem{Scherzer2005}
{\sc O.~Scherzer}, {\em Taut-string algorithm and regularization programs with
  g-norm data fit}, Journal of Mathematical Imaging and Vision, 23 (2005),
  pp.~135--143.

\bibitem{Schoenberg1946}
{\sc I.~J. Schoenberg}, {\em Contributions to the problem of approximation of
  equidistant data by analytic functions}, Quarterly of Applied Mathematics, 4
  (1946), pp.~45--99, 112--141.

\bibitem{Schultz1967}
{\sc M.~H. Schultz and R.~S. Varga}, {\em L-splines}, Numerische Mathematik, 10
  (1967), pp.~345--369.

\bibitem{Schumaker2007}
{\sc L.~L. Schumaker}, {\em Spline Functions: {B}asic Theory}, Cambridge
  University Pressity Press, Cambridge, 3rd edition~ed., 2007.

\bibitem{steidl2006splines}
{\sc G.~Steidl, S.~Didas, and J.~Neumann}, {\em Splines in higher order {TV}
  regularization}, International Journal of Computer Vision, 70 (2006),
  pp.~241--255.

\bibitem{Tibshirani1996}
{\sc R.~Tibshirani}, {\em Regression shrinkage and selection via the {Lasso}},
  Journal of the Royal Statistical Society. Series B, 58 (1996), pp.~265--288.

\bibitem{Unser2000}
{\sc M.~Unser}, {\em Sampling---50 years after {S}hannon}, Proceedings of the
  IEEE, 88 (2000), pp.~569--587.

\bibitem{Unser2000a}
{\sc M.~Unser and T.~Blu}, {\em Fractional splines and wavelets}, {SIAM}
  Review, 42 (2000), pp.~43--67.

\bibitem{Unser2005}
{\sc M.~Unser and T.~Blu}, {\em Cardinal exponential splines: {P}art
  {I}---{T}heory and filtering algorithms}, IEEE Transactions on Signal
  Processing, 53 (2005), pp.~1425--1449.

\bibitem{Unser2007}
{\sc M.~Unser and T.~Blu}, {\em Self-similarity: {P}art {I}---{S}plines and
  operators}, {IEEE} Transactions on Signal Processing, 55 (2007),
  pp.~1352--1363.

\bibitem{Unser2016}
{\sc M.~Unser, J.~Fageot, and H.~Gupta}, {\em Representer theorems for
  sparsity-promoting {$\ell_1$} regularization}, IEEE Transactions on
  Information Theory, 62 (2016), pp.~5167--5180.

\bibitem{Unser2014book}
{\sc M.~Unser and P.~D. Tafti}, {\em An Introduction to Sparse Stochastic
  Processes}, Cambridge University Press, 2014.

\bibitem{Vetterli2002}
{\sc M.~Vetterli, P.~Marziliano, and T.~Blu}, {\em Sampling signals with finite
  rate of innovation}, {IEEE} Transactions on Signal Processing, 50 (2002),
  pp.~1417--1428.

\bibitem{Ward20l4}
{\sc J.~Ward and M.~Unser}, {\em Approximation properties of {S}obolev splines
  and the construction of compactly supported equivalents}, {SIAM} Journal on
  Mathematical Analysis, 46 (2014), pp.~1843--1858.

\bibitem{Wendland2005}
{\sc H.~Wendland}, {\em Scattered Data Approximations}, Cambridge University
  Press, 2005.

\end{thebibliography}

\end{document}